\newcommand{\kk}{\Bbbk}
\def\Hom{\operatorname{Hom}}
\def\Z{\mathbb{Z}}
\def\N{\mathbb{N}}
\def\im{\operatorname{im}}
\def\coker{\operatorname{coker}}
\def\Ext{\operatorname{Ext}}
\def\Tr{\operatorname{Tr}}
\def\Id{\operatorname{id}}
\newtheorem{Lemma}{Lemma}
\newtheorem{Theorem}[Lemma]{Theorem}
\newtheorem{Corollary}[Lemma]{Corollary}
\newtheorem{prop}[Lemma]{Proposition}
\newtheorem*{Corollary of Conjecture}{Corollary of Conjecture}
\theoremstyle{definition}
\theoremstyle{remark}
  \newtheorem{rem}[Lemma]{Remark}
\newtheoremstyle{Acknowledgments}
  {}
    {}
     {}
     {}
    {\bfseries}
    {}
     {.5em}
     {\thmname{#1}\thmnumber{ }\thmnote{ (#3)}}
\theoremstyle{Acknowledgments}
\title{The relative Heller operator and relative cohomology for the Klein 4-group.}
\author{Jonathan Elmer}
\address{Middlesex University\\
The Burroughs, Hendon, London\\
NW4 4BT UK}
\email{j.elmer@mdx.ac.uk}
\date{\today}
\subjclass[2020]{20J06,20C20}
\keywords{cohomology of groups, relative cohomology, modular representation theory, cup product}
\begin{document}

\maketitle

\begin{abstract} Let $G$ be the Klein Four-group and let $\kk$ be an arbitrary field of characteristic 2. A classification of indecomposable $\kk G$-modules is known. We calculate the relative cohomology groups $H^i_{\chi}(G,N)$ for every indecomposable $\kk G$-module $N$, where $\chi$ is the set of proper subgroups in $G$. This extends work of Pamuk and Yalcin to cohomology with non-trivial coefficients. We also show that all cup products in strictly positive degree in $H_{\chi}^*(G,\kk)$ are trivial. 
\end{abstract}

\section{Introduction}
Let $G$ be a finite group and $\kk$ a field of characteristic $p>0$. If $p \not | |G|$, then every representation of $G$ over $\kk$ is projective. Thus, by decomposing the regular module $\kk G$ we can obtain all isomorphism classes of $\kk G$-modules immediately.

From now on assume $p | |G|$. Then the above is no longer true. However, it is well-known that, given a $\kk G$-module $M$, we can find a projective module $P_0$ and a surjective $\kk G$-morphism
\[\pi_0: P_0 \rightarrow M.\]
If we choose $P_0$ and $\pi_0$ so that $P_0$ has smallest possible dimension, then this pair is unique, and known as the projective cover of $M$. The kernel of $\pi_0$ is denoted $\Omega(M).$ The is known as the Heller shift of $M$. $\Omega(-)$ can be viewed as an operation on the set of $\kk G$-modules which takes indecomposable modules to indecomposable modules.

This construction can be iterated. For each $i>0$, let $\pi_i: P_i \rightarrow \Omega^i(M)$ be the projective cover of $\Omega^i(M)$. By composing these maps with the inclusions $\Omega^i(M) \rightarrow P_{i-1}$, we obtain an exact sequence 

\begin{equation}\label{projres} \ldots P_i \rightarrow P_{i-1} \rightarrow \ldots \rightarrow P_0 \rightarrow M \rightarrow 0.\end{equation}  

This is an example of a projective resolution for $M$. If $N$ is any $\kk G$-module, then the above induces a complex

\[0 \rightarrow \Hom_{\kk G}(P_0,M) \rightarrow \ldots \rightarrow \Hom_{\kk G}(P_i, M) \rightarrow \ldots\]

which is not exact in general. The homology groups of this complex are by definition the groups $\Ext_{\kk G}^i(M,N)$. A special case is
\[H^i(G,N):= \Ext^i_{\kk G} (\kk, N).\]
We call this the \emph{cohomology of $G$ with coefficients in $N$}.

There is a long and fruitful history of study of the cohomology groups $H^i(G,N)$ in modular representation theory. Further, one may define a pairing
\[\smile: H^i(G,\kk) \otimes H^j(G, \kk) \rightarrow H^{i+j}(G,\kk)\] which gives $H^*(G,\kk)$ the structure of a graded-commutative graded ring. A celebrated theorem of Evens (see \cite[Theorem~4.2.1]{Benson2}) states that, for any $G$, the ring $H^*(G,\kk)$ is finitely generated. 

Now let $\chi$ be a set of proper subgroups of $G$. A $\kk G$-module $M$ is said to be \emph{projective relative to $\chi$} if $M$ is  a direct summand of $\oplus_{X \in \chi} M \downarrow_X \uparrow^G$. Other equivalent definitions will be given in section 2. It is less well-known, but still true, that every $\kk G$-module has a unique relative projective cover with respect to $\chi$. This is defined to be a $\kk G$-module $Q_0$ of smallest dimension such that

\begin{enumerate} 
\item $Q_0$ is projective relative to $\chi$;
\item There is a surjective $\kk G$-morphism $\pi_0: Q_0 \rightarrow M$ which splits on restriction to each $X \in \chi$.
\end{enumerate}

The kernel of $Q_0$ is denoted $\Omega_{\chi}(M)$ and called the relative Heller shift of $M$ with respect to $\chi$. We can mimic the construction of (\ref{projres}) to obtain a relative projective resolution of $M$, that is, an exact sequence

\begin{equation}\label{relprojres} \ldots Q_i \rightarrow Q_{i-1} \rightarrow \ldots \rightarrow Q_0 \rightarrow M \rightarrow 0.\end{equation}  

of $\kk G$ modules which are projective relative to $\chi$ and in which the connecting homomorphisms split over each $X \in \chi$. Given any $\kk G$-module $M$, the above induces a complex

\[0 \rightarrow  \Hom_{\kk G}(Q_0,M) \rightarrow \ldots \rightarrow \Hom_{\kk G}(Q_i, M) \rightarrow \ldots\]

which is in general no longer exact. The homology groups of this complex are by definition the relative Ext-groups $\Ext_{\kk G, \chi}^i(M,N)$. The relative cohomology of $G$ with respect to $\chi$ with coefficients in $N$ is the special case
\[H^i_{\chi}(G, N): = \Ext_{\kk G, \chi}^i(\kk, N).\]

Further, one may define a pairing \[\smile: H_{\chi}^i(G,\kk) \otimes H_{\chi}^j(G, \kk) \rightarrow H_{\chi}^{i+j}(G,\kk)\] which gives $H^*(G,\kk)$ the structure of a graded-commutative graded ring.

Computations of $H_{\chi}^i(G,N)$ are rare in the literature. It is notable that the ring $H^*_{\chi}(G, \kk)$ is not finitely generated in general. This was first discovered by Blowers \cite{Blowers}, who showed that if $G_1$ and $G_2$ are finite groups of order divisible by $p$, and $\chi_1, \chi_2$ are sets of subgroups of $G_1, G_2$ respectively with order divisible by $p$, then all products of elements of positive degree in $H^*_{\chi}(G, \kk)$ are zero, where $G = G_1 \times G_2$ and $\chi = \{G_1 \times X: X \in \chi_2\} \cup \{X \times G_2: X \in \chi_1\}$. See also \cite{Brown}.

For the rest of this section, let $G = \langle \sigma, \tau \rangle$ denote the Klein four-group, and let $\kk$ be a field of characteristic 2. We set $\chi = \{H_1,H_2,H_3\}$, the set of all proper nontrivial subgroups of $G$, where $H_1 = \langle \sigma \rangle, H_2 = \langle \tau \rangle, H_3 = \langle \sigma \tau \rangle$. 

The cohomology groups $H_{\chi}^i(G,\kk)$ were computed, by indirect means, by Pamuk and Yalcin \cite{PamukYalcin}. In the present article we recover their result, and also compute $H^i_{\chi}(G,N)$ for any $\kk G$-module $N$. Our methods are more direct; we compute an explicit relative projective resolution for each $N$. Of course we are helped enormously by the fact that the representations of $G$ are completely classified. Our first main result is:

\begin{Theorem}\label{heller}
Let $M$ be an indecomposable $\kk G$-module, which is not projective relative to $\chi$. Then we have
\[\Omega_{\chi}(M) \cong \Omega^{-2}(M)\] if $M$ has odd dimension, and
\[\Omega_{\chi}(M) \cong M\] otherwise.
\end{Theorem}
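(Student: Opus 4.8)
The plan is to use the classification of indecomposable $\kk G$-modules and, for each such $M$ that is not projective relative to $\chi$, to write down an explicit relative projective presentation whose kernel can be identified directly. The essential preliminary fact is that a $\kk G$-module is projective relative to $\chi$ precisely when it is a direct sum of copies of $\kk G$ and of the three modules $L_i := \kk\downarrow_{H_i}\uparrow^G \cong \kk[G/H_i]$ ($i=1,2,3$): since $N\downarrow_{H_i}$ is a sum of trivial and regular $\kk H_i$-modules, $N\downarrow_{H_i}\uparrow^G$ is a sum of copies of $L_i$ and $\kk G$, and Krull--Schmidt finishes the argument. Hence the relative projective cover $Q_0$ of $M$ is always a sum of copies of $\kk G$ and the $L_i$, and the task splits into (a) identifying the smallest such $Q_0$ with a surjection onto $M$ that is split on restriction to each $H_i$, and (b) computing the kernel. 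A convenient universal $\chi$-split presentation to start from is $\bigoplus_{i}M\downarrow_{H_i}\uparrow^G\to M$ (the sum of the counits); $\Omega_{\chi}(M)$ is then its kernel after splitting off relatively projective summands.

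For even-dimensional $M$ I would first record that $\Omega(M)\cong M$ for every even-dimensional indecomposable non-projective module (this follows from the structure of the stable Auslander--Reiten quiver: these modules lie in rank-$1$ tubes, on which the Auslander--Reiten translate, equal to $\Omega^{2}$ since $\kk G$ is symmetric, acts as the identity, and one checks by hand that $\Omega$ already fixes the two-dimensional modules). If moreover $M\downarrow_{H_i}$ is free for all $i$ --- equivalently, $M$ does not lie in one of the three exceptional tubes --- then the ordinary projective cover $\kk G^{c}\to M$ is already $\chi$-split, since a surjection of $\kk H_i$-modules onto a free module splits; a short top-counting argument shows it is the relative projective cover as well, whence $\Omega_{\chi}(M)=\Omega(M)\cong M$. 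For the remaining even $M$, which sit above some $L_j$ in an exceptional tube so that $M\downarrow_{H_j}$ is non-free while $M\downarrow_{H_i}$ is free for $i\neq j$, I would build $Q_0$ from copies of $\kk G$ together with a suitable number of copies of $L_j$, chosen so that the structure map carries the non-free part of $M\downarrow_{H_j}$, and verify by a direct computation with the explicit matrices that the kernel is again isomorphic to $M$ and that $Q_0$ is minimal.

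For odd-dimensional $M$ the indecomposables are exactly $\Omega^{n}(\kk)$, $n\in\ZZ$; now $M\downarrow_{H_i}$ always has a trivial summand, so all three $L_i$ must occur in $Q_0$ and no ordinary projective cover is $\chi$-split. I would treat $M=\kk$ first: here $Q_0=L_1\oplus L_2\oplus L_3$ with the sum of the augmentation maps, and an explicit computation identifies the kernel (of dimension $5$) with $\Omega^{-2}(\kk)$. The general odd case then follows either by the same explicit procedure or by applying $\Omega^{n}$ throughout and checking that $\Omega$ and $\Omega_{\chi}$ commute modulo relatively projective summands, which would give $\Omega_{\chi}(\Omega^{n}\kk)\cong\Omega^{n}(\Omega^{-2}\kk)\cong\Omega^{n-2}(\kk)=\Omega^{-2}(\Omega^{n}\kk)$.

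The two steps I expect to be genuinely delicate are the minimality of $Q_0$ and, in the odd case, the exact isomorphism type of the kernel. For minimality one cannot simply exhibit a $\chi$-split presentation: one must rule out any relatively projective module of strictly smaller dimension, which I would do by combining a count of how many summands of each type $\kk G, L_1, L_2, L_3$ can map nontrivially onto $\mathrm{top}(M)$ with the obstruction that over $H_i$ a surjection from a free $\kk H_i$-module onto a non-free module never splits. For the kernel in the odd case, dimension considerations only show it is $\Omega^{2}(M)$ or $\Omega^{-2}(M)$, so I would distinguish the two by computing a genuine invariant --- the ranks of $\sigma-1$, $\tau-1$ and $\sigma\tau-1$ on the kernel, equivalently the dimension vector of the associated Kronecker-quiver module --- to confirm that it is $\Omega^{-2}(M)$.
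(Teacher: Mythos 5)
Your overall strategy is essentially the paper's: identify the relatively projective indecomposables as $P$ together with the three two-dimensional induced modules, then work through the classification, building an explicit minimal $\chi$-split cover in each case (the trivial summands of $M\downarrow_{H_i}$ force induced summands, a top-counting argument forces the $\kk G$-summands, and when every restriction is free the ordinary projective cover is already $\chi$-split), and finally identify the kernel. The only structural difference is how you propagate the odd case from $M=\kk$: the paper computes $\Omega_{\chi}(V_{-(2n+1)})$ explicitly for all $n$, then uses duality ($\Omega_{\chi}(M)^{*}\cong\Omega_{\chi}^{-1}(M^{*})$, via Propositions \ref{dual} and \ref{omegaomnibus}) plus a separate computation for $V_3$, whereas you propose either repeating the explicit computation or invoking commutation of $\Omega$ with $\Omega_{\chi}$ modulo relatively projective summands. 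That commutation is true (for instance because $\Omega_{\chi}(M)\cong\Omega_{\chi}(\kk)\otimes M$ and $\Omega(M)\cong\Omega(\kk)\otimes M$ up to (relatively) projective summands, by the relative Schanuel lemma applied to the tensored cover sequences), but it is not in the paper and would need that short argument written out rather than just "checked".

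One concrete verification step would fail as literally stated: the ranks of $\sigma-1$, $\tau-1$ and $\sigma\tau-1$ do \emph{not} distinguish $\Omega^{2}(M)$ from $\Omega^{-2}(M)$ in the odd case, since on both $V_{2k+1}$ and $V_{-(2k+1)}$ each of these elements has rank exactly $k$. The invariant you mention second is the one that works (and it is not equivalent to those ranks): the dimension vector $(\dim M/\operatorname{rad}M,\;\dim\operatorname{rad}M)$, or equivalently $\dim M^{G}$, which differs ($k$ versus $k+1$) for the two candidates; this is in effect what the paper computes when it counts the fixed points of the kernel ($n+3$ out of dimension $2n+5$) to conclude the kernel is $V_{-(2n+5)}$. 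Note also that both your argument and the paper's need the kernel of the minimal cover to be indecomposable before "dimension plus one further invariant" pins down its isomorphism type; this follows from Proposition \ref{omegaomnibus} (so that $\Omega_{\chi}$ sends indecomposables without relatively projective summands to indecomposables), and deserves an explicit sentence in your write-up.
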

 
The ring structure of $H^*_{\chi}(G,\kk)$ was not considered in \cite{PamukYalcin}. Note, however, that if $\chi'$ is a subset of $\chi$ with size 2, then all products in $H_{\chi'}^*(G,\kk)$ are zero, by a special case of Blowers' result. It is perhaps not surprising, therefore, that we have

\begin{Theorem}\label{cupprod}
Let $\alpha_1, \alpha_2 \in H_{\chi}^*(G, \kk)$, where both have strictly positive degree. Then $\alpha_1 \smile \alpha_2= 0$.
\end{Theorem}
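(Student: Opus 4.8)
The plan is to compute cup products directly at the cochain level, using the explicit relative projective resolution $Q_\bullet \to \kk$ produced in the previous section together with a diagonal approximation. Recall that $H^*_{\chi}(G,\kk) = \Ext^*_{\kk G,\chi}(\kk,\kk)$ and that its ring structure is computed from a chain map $\Delta\colon Q_\bullet \to Q_\bullet \otimes_{\kk} Q_\bullet$ covering the canonical isomorphism $\kk \xrightarrow{\ \sim\ } \kk \otimes_{\kk} \kk$: if $f \in \Hom_{\kk G}(Q_i,\kk)$ and $g \in \Hom_{\kk G}(Q_j,\kk)$ are cocycles representing $\alpha_1, \alpha_2$, then $\alpha_1 \smile \alpha_2$ is represented by $(f \otimes g)\circ \Delta_{i+j}$, where $\Delta_{i+j}$ is followed by projection onto the summand $Q_i \otimes_{\kk} Q_j$. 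For this to be legitimate one must know that $Q_\bullet \otimes_{\kk} Q_\bullet \to \kk$ is again a $\chi$-split resolution of $\kk$ by relatively $\chi$-projective modules, so that it also computes $H^*_{\chi}(G,\kk)$ and receives a comparison chain map from $Q_\bullet$.

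So the first step is to verify these two properties of $Q_\bullet \otimes_{\kk} Q_\bullet$. By Theorem~\ref{heller} and the explicit resolution underlying it, each $Q_n$ is a direct sum of copies of $\kk G$ and of the modules $\kk{\uparrow}_{H_i}^{G} \cong \kk[G/H_i]$, $i = 1,2,3$. The tensor identity $\kk{\uparrow}_{H_i}^{G}\otimes_{\kk}\kk{\uparrow}_{H_j}^{G} \cong \kk[(G/H_i)\times(G/H_j)]$, combined with the fact that $H_i \cap H_j = 1$ in the Klein four-group for $i \neq j$, shows that this tensor product is free when $i \neq j$ and is $(\kk{\uparrow}_{H_i}^{G})^{\oplus 2}$ when $i = j$; since tensoring with $\kk G$ gives a free module, every $Q_p \otimes_{\kk} Q_q$ is relatively $\chi$-projective. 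For $\chi$-splitness, note that the restriction of the augmented complex $Q_\bullet \to \kk$ to any $H \in \chi$ is a split exact, hence contractible, complex of $\kk H$-modules, and the tensor product of a contractible complex with any complex is contractible, so $(Q_\bullet \otimes_{\kk} Q_\bullet){\downarrow}_{H}$ is split exact over $\kk H$. Hence the comparison theorem of relative homological algebra applies, $\Delta$ exists, and it computes $\smile$. This is the mechanism underlying the vanishing theorems of Blowers and Brown, adapted to our $\chi$, which is not of product type.

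The second step is the computation proper. Writing each cocycle $f\colon Q_i \to \kk$ as $\bar f \circ d_i$ with $d_i\colon Q_i \twoheadrightarrow \Omega^i_{\chi}(\kk)$ the canonical surjection and $\bar f\colon \Omega^i_{\chi}(\kk) \to \kk$, one gets $(f \otimes g)\circ \Delta_{i+j} = (\bar f \otimes \bar g)\circ(d_i \otimes d_j)\circ \Delta_{i+j}$, which factors through the induced map $\delta_{i,j}\colon \Omega^{i+j}_{\chi}(\kk) \to \Omega^i_{\chi}(\kk)\otimes_{\kk}\Omega^j_{\chi}(\kk)$ on syzygies. Using the standard dimension-shift $H^n_{\chi}(G,\kk) \cong \underline{\Hom}_{\kk G,\chi}(\Omega^n_{\chi}(\kk),\kk)$ (homomorphisms modulo those factoring through relatively $\chi$-projective modules), the equality $\alpha_1 \smile \alpha_2 = 0$ reduces to $(\bar f \otimes \bar g)\circ \delta_{i,j}$ factoring through a relatively $\chi$-projective module. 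At this point I would feed in Theorem~\ref{heller} to replace $\Omega^m_{\chi}(\kk)$ by $\Omega^{-2m}(\kk)$ and exploit the explicit string-module structure of the modules $\Omega^{-2m}(\kk)$ for $m \geq 1$ — in particular the fact that each such module surjects onto the relatively $\chi$-projective modules $\kk{\uparrow}_{H_i}^{G}$ through its "peaks" — to exhibit the required factorization whenever $i, j \geq 1$.

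The main obstacle is exactly this last point: producing, uniformly in $i$ and $j$, an explicit diagonal approximation $\Delta$ (equivalently, explicit syzygy maps $\delta_{i,j}$) and checking that, after pairing with an arbitrary pair of cocycles, every product term factors through a relatively $\chi$-projective module. Because $H^*_{\chi}(G,\kk)$ is not finitely generated — indeed not generated in degree $1$, as the additive structure computed above already shows — one cannot reduce to finitely many product computations, so the argument must be genuinely uniform in the bidegree. Organising the bookkeeping so that an induction on $i+j$ closes — using coassociativity of $\Delta$ up to homotopy to reduce a general product to products having a degree-$1$ factor, and then disposing of the degree-$(1,j)$ products via the structure of $\Omega^{-2j}(\kk)$ — is the delicate part of the proof.
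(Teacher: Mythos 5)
Your setup—checking that $Q_\bullet \otimes_\kk Q_\bullet$ is again a $\chi$-split complex of relatively $\chi$-projective modules (via $H_i \cap H_j = 1$ and the Mackey/tensor-identity argument) and that a diagonal approximation therefore exists and computes the cup product—is legitimate and correctly verified for this particular $\chi$. But the proposal stops exactly where the theorem begins: the actual vanishing. You reduce $\alpha_1\smile\alpha_2=0$ to showing that $(\bar f\otimes\bar g)\circ\delta_{i,j}$ factors through a relatively $\chi$-projective module, and then say you would "exploit the string-module structure" of $\Omega^{-2m}(\kk)$ and organise an induction on $i+j$, explicitly flagging this as the main obstacle and the delicate part. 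That is not a proof sketch of the key step; it is an acknowledgement that the key step is missing. No explicit $\Delta$ or $\delta_{i,j}$ is produced, no factorization is exhibited, and the proposed reduction to degree-$(1,j)$ products via coassociativity is not carried out, so the mathematical content of Theorem \ref{cupprod} remains unestablished.

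The missing idea is a structural fact about homomorphisms between the odd-dimensional string modules, which makes all this machinery unnecessary. The paper uses the composition description of the cup product already set up in Section 2: $\alpha_1\smile\alpha_2$ is represented by $f\circ\Omega^i_\chi(g)$ with $f\in\Hom_{\kk G}(\Omega^i_\chi(\kk),\kk)$ and $\Omega^i_\chi(g)\in\Hom_{\kk G}(\Omega^{i+j}_\chi(\kk),\Omega^i_\chi(\kk))$. By Theorem \ref{heller} (via Lemma \ref{negodd}) these relative syzygies are modules of type $V_{-(2k+1)}$ with strictly increasing $k$, and the key lemma is that any $\kk G$-map $V_{-(2m+1)}\to V_{-(2n+1)}$ with $m>n$ has image contained in the fixed points of the target and contains the fixed points of the source in its kernel; hence any composite through three such modules of strictly decreasing size is identically zero (Corollary \ref{trivcomp}). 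So the product vanishes on the nose, with no diagonal approximation, no tensor-square resolution, and no induction uniform in the bidegree. If you want to salvage your route, you would still need an analogue of this lemma to produce the factorization through a relative projective, at which point the simpler composition argument does the whole job.
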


This paper is organised as follows. In section 2 we define relative projectivity and derive the results we will need to do the computations in later sections. This section follows \cite[Section~2]{ElmerSymPowers} fairly closely. As most proofs can be constructed by adapting familiar results projectivity to the relative case, they are omitted. In section 3 we describe the classification of modules for the Klein-four group and prove Theorem \ref{heller}. We also compute $H_{\chi}^i(G,N)$ for every $\kk G$-module $N$ and prove Theorem \ref{cupprod}. 
 
\subsection{Notation} All groups under consideration are finite groups, and for any group $G$, by a $\kk G$-module we mean a finitely-generated $\kk$-vector space with compatible $G$ action. The one-dimensional trivial $\kk G$-module will be denoted by $\kk_G$ or simply $\kk$ when the group acting is obvious, and for $n\in \N$ and $M$ a $\kk G$-module we write $nM$ for the direct sum of $n$ copies of $M$.

\section{Relative projectivity}

In this section, let $p>0$ be a prime and let $G$ be a finite group of order divisible by $p$. Let $\kk$ be a field of characteristic $p$ and let $\chi$ be a set of subgroups of $G$.
Now let $M$ be a finitely generated $\kk G$-module. $M$ is said to be \emph{projective relative to $\chi$} if the following holds: let $\phi:M \rightarrow Y$ be a $\kk G$-homomorphism and $j: X \rightarrow Y$ a surjective $\kk G$-homomorphism which splits on restriction to any subgroup of $H \in \chi$. Then there exists a $\kk G$-homomorphism $\psi$ making the following diagram commute.

\begin{center}
\begin{tikzpicture}
\draw[->] (0,0) node[left] {$X$} -- (2,0) node[right] {$Y$}; 
\draw[->] (2.5,0) -- (4.5,0) node[right] {0};
\draw[->] (2.2,2) node[above] {$M$} -- (2.2,0.3);
\draw[dashed,->] (2,2) -- (0,0.3);
\draw (0.8,1.2) node[above]{$\psi$};
\draw (1,0) node[above] {$j$};
\draw (2.2,1) node[right] {$\phi$};
\end{tikzpicture}
\end{center}  

Dually, one says that $M$ is \emph{injective relative to $\chi$} if the following holds: given an injective $\kk G$-homomorphism $i: X \rightarrow Y$ which splits on restriction to each $H \in \chi$ and a $\kk G$-homomorphism $\phi:  X \rightarrow M$, there exists a $\kk G$-homomorphism $\psi$ making the following diagram commute.

\begin{center}
\begin{tikzpicture}
\draw[->] (0,0) node[left] {$X$} -- (2,0) node[right] {$Y$}; 
\draw[<-] (-0.5,0) -- (-2.5,0) node[left] {0};
\draw[->] (-0.2,-0.2) -- (-0.2,-2)node[below] {$M$};
\draw[dashed,->] (2,-0.2) -- (0.2,-2);
\draw (1.2,-1.2) node[below]{$\psi$};
\draw (1,0) node[above] {$i$};
\draw (-0.2,-1) node[left] {$\phi$};
\end{tikzpicture}
\end{center}  

These notions are equivalent to the usual definitions of projective and injective $\kk G$-modules when we take $\chi = \{1\}$. We will say a $\kk G$-homomorphism is $\chi$-split if it splits on restriction to each $H \in \chi$.  Since a $\kk G$-module is projective relative to $H$ if and only if it is also projective relative to the set of all subgroups of $H$, we usually assume $\chi$ is closed under taking subgroups.

We denote the set of $G$-fixed points in $M$ by $M^G$. For any $H \leq G$ there is a $\kk G$-map $M^H \rightarrow M^G$ defined as follows:
\[\Tr^G_H(x) = \sum_{\sigma \in S} \sigma x\] where $x \in M$ and $S$ is a left-transversal of $H$ in $G$. This is called the relative trace or transfer. It is clear that the map is independent of the choice of $S$. If $H=1$ we usually write this as $\Tr^G$ and call it simply the trace or transfer. For any set of subgroups $\chi$ of $G$ we define the subspace
\[M^{G, \chi}:= \sum_{H \in \chi} \Tr^H_G(M^H)\]
and quotient
\[M^G_{\chi}:= \frac{M^G}{M^{G,\chi}}.\]

Now let $N$ be another $\kk G$-module. We can define an action of $G$ on $\Hom_{\kk}(M,N)$: 
\[(g \cdot \phi)(x) = g \phi(g^{-1}x) \ \text{for} \ g \in G, x \in M.\]
Notice that with this action we have $\Hom_{\kk}(M,N)^G = \Hom_{\kk G}(M,N)$. Further, the transfer construction gives a map $$\Tr^G_H: \Hom_{\kk H}(M,N) \rightarrow \Hom_{\kk G}(M,N).$$

There a various ways to characterize relative projectivity:
\begin{prop}\label{omnibus}
Let $G$ be a finite group of order divisible by $p$, $\chi$ a set of subgroups of $G$ and $M$ a $\kk G$-module. Then the following are equivalent:
\begin{enumerate}
\item[(i)] $M$ is projective relative to $\chi$;
\item[(ii)] Every $\chi$-split epimorphism of $\kk G$-modules $\phi: N \rightarrow M$ splits;
\item[(iii)] $M$ is injective relative to $\chi$;
\item[(iv)] Every $\chi$-split monomorphism of $\kk G$-modules $\phi: M \rightarrow N$ splits;
\item[(v)] $M$ is a direct summand of $\oplus_{H \in \chi} M \downarrow_H \uparrow^G$;
\item[(vi)] $M$ is a direct summand of a direct sum of modules induced from subgroups in $\chi$
\item[(vii)] There exists a set of homomorphisms $\{\beta_{H}: H \in \chi\}$  such that $\beta_H \in \Hom_{\kk H}(M,M)$ and $\sum_{H \in \chi} \Tr_H^G(\beta_H) = \Id_M$.
\end{enumerate}
\end{prop}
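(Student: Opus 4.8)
The plan is to prove the ring of implications (i)$\Rightarrow$(ii)$\Rightarrow$(v)$\Rightarrow$(vi)$\Rightarrow$(vii)$\Rightarrow$(i) together with the further implications (vii)$\Rightarrow$(iii)$\Rightarrow$(iv)$\Rightarrow$(v). These eight implications make the directed graph on the seven statements strongly connected, so they yield the full equivalence. All of them are formal manipulations of (co)units, transfers, and the defining lifting/extension properties, except for (vi)$\Rightarrow$(vii), which is the relative Higman criterion and is the only step needing genuine computation.

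For the formal implications: to get (i)$\Rightarrow$(ii), feed a $\chi$-split epimorphism $\phi\colon N\to M$ and the map $\Id_M\colon M\to M$ into the lifting property defining relative projectivity; the lift is a splitting of $\phi$. Dually, for (iii)$\Rightarrow$(iv), feed a $\chi$-split monomorphism $\phi\colon M\to N$ and $\Id_M$ into the extension property defining relative injectivity. For (ii)$\Rightarrow$(v), the natural map $\varepsilon\colon\bigoplus_{H\in\chi}(M\downarrow_H\uparrow^G)\to M$ with $H$-component $g\otimes m\mapsto gm$ is surjective and $\chi$-split --- for each $K\in\chi$ the $\kk K$-homomorphism $m\mapsto 1\otimes m$ into the $K$-th summand is a section of $\varepsilon$ on restriction to $K$ --- hence splits by (ii), exhibiting $M$ as a direct summand of $\bigoplus_{H\in\chi}(M\downarrow_H\uparrow^G)$. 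Dually, for (iv)$\Rightarrow$(v) the $\kk G$-homomorphism $\eta\colon M\to\bigoplus_{H\in\chi}(M\downarrow_H\uparrow^G)$ with $H$-component $m\mapsto\sum_{s\in S}s\otimes s^{-1}m$ (for $S$ a left transversal of $H$ with $1\in S$) is $\chi$-split, since for $K\in\chi$ the $\kk K$-linear map on the $K$-th summand sending $g\otimes m\mapsto gm$ if $g\in K$ and $0$ otherwise is a retraction of $\eta$ on restriction to $K$; hence it splits by (iv). Finally (v)$\Rightarrow$(vi) is immediate, since each $M\downarrow_H\uparrow^G$ is induced from the subgroup $H\in\chi$.

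The substantive step is (vi)$\Rightarrow$(vii). I would first handle $M=V\uparrow^G$ for a $\kk H$-module $V$ with $H\in\chi$: fixing a left transversal $S$ of $H$ with $1\in S$, write $V\uparrow^G=\bigoplus_{s\in S}(s\otimes V)$ and let $\beta$ be the $\kk$-linear projection onto the summand $1\otimes V$. Because $H$ permutes the cosets $sH$ and fixes $H$, the map $\beta$ is $\kk H$-linear, and a short transversal computation gives $\Tr^G_H(\beta)=\Id_{V\uparrow^G}$. For a finite direct sum of modules of this type, assemble the corresponding endomorphisms summandwise, grouped by which subgroup of $\chi$ each summand is induced from, to obtain a family $\{\gamma_H\}_{H\in\chi}$ with $\sum_{H\in\chi}\Tr^G_H(\gamma_H)=\Id$; and if $M$ is a direct summand with inclusion $\iota$ and retraction $\rho$, then $\{\rho\,\gamma_H\,\iota\}_{H\in\chi}$ works, using that $\Tr^G_H$ commutes with pre- and post-composition by $\kk G$-homomorphisms (since these commute with the conjugations defining the transfer). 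I expect this transversal and double-coset bookkeeping to be the only part of the argument that is not purely formal.

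Finally, (vii)$\Rightarrow$(i) and (vii)$\Rightarrow$(iii) are averaging arguments. Suppose $\{\beta_H\}_{H\in\chi}$ satisfies $\sum_{H\in\chi}\Tr^G_H(\beta_H)=\Id_M$. Given a $\chi$-split epimorphism $j\colon X\to Y$ with $\kk H$-sections $s_H$ and a $\kk G$-homomorphism $\phi\colon M\to Y$, put $\psi:=\sum_{H\in\chi}\Tr^G_H(s_H\phi\beta_H)$; this is $\kk G$-linear, and since $j$ and $\phi$ are $\kk G$-linear they can be moved through $\Tr^G_H$, so $j\psi=\sum_{H\in\chi}\Tr^G_H(j s_H\phi\beta_H)=\sum_{H\in\chi}\Tr^G_H(\phi\beta_H)=\phi\circ\sum_{H\in\chi}\Tr^G_H(\beta_H)=\phi$, using $j s_H=\Id$ on restriction to $H$; this is the lifting property (i). The dual formula $\psi:=\sum_{H\in\chi}\Tr^G_H(\beta_H\phi r_H)$, for a $\chi$-split monomorphism $i\colon X\to Y$ with $\kk H$-retractions $r_H$ and $\phi\colon X\to M$, satisfies $\psi i=\phi$ by the same bookkeeping, giving the extension property (iii). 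This closes all eight implications.
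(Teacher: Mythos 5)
Your proof is correct: all eight implications check out, including the coset/transversal computations behind $\mathrm{Tr}^G_H(\beta)=\mathrm{Id}$ in (vi)$\Rightarrow$(vii) and the averaging formulas with the Frobenius property of the transfer in (vii)$\Rightarrow$(i) and (vii)$\Rightarrow$(iii). The paper gives no details, merely citing Benson's Proposition~3.6.4 (the single-subgroup case) and asserting the generalization is easy; what you have written is precisely that standard argument carried out for a set of subgroups, so it fills in the omitted proof rather than taking a different route.
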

 
The last of these is called \emph{Higman's criterion}.
\begin{proof} The proof when $\chi$ consists of a single subgroup of $G$ can be found in \cite[Proposition~3.6.4]{Benson1}. This can easily be generalised. 
\end{proof}

For homomorphisms $\alpha \in \Hom_{\kk G}(M,N)$ we have the following:
\begin{Lemma}\label{factorsthrough} Let $M$, $N$ be $\kk G$-modules, $\chi$ a collection of subgroups of $G$, and $\alpha \in \Hom_{\kk G}(M,N)$. Then the following are equivalent:

\begin{enumerate}
\item[(i)] $\alpha$ factors through $\oplus_{H \in \chi} M \downarrow_H \uparrow^G$.
\item[(ii)] $\alpha$ factors through some module which is projective relative to $\chi$.
\item[(iii)] There exist homomorphisms $\{\beta_H \in \Hom_{\kk H}(M,N): H \in \chi\}$ such that $\alpha = \sum_{H \in \chi} \Tr^G_H(\beta_H)$. 
\end{enumerate}
 \end{Lemma}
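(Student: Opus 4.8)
The plan is to establish the cycle of implications (i) $\Rightarrow$ (ii) $\Rightarrow$ (iii) $\Rightarrow$ (i). The first implication is essentially free: the module $\bigoplus_{H \in \chi} M\downarrow_H\uparrow^G$ is a direct sum of modules induced from subgroups in $\chi$, hence is itself projective relative to $\chi$ by Proposition~\ref{omnibus}~(vi) (it is a direct summand of itself). So any $\alpha$ satisfying (i) factors through a relatively projective module, which is (ii).

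For (ii) $\Rightarrow$ (iii), suppose $\alpha = \gamma \circ \delta$ with $\delta \in \Hom_{\kk G}(M,P)$, $\gamma \in \Hom_{\kk G}(P,N)$ and $P$ projective relative to $\chi$. Higman's criterion, Proposition~\ref{omnibus}~(vii), supplies maps $\beta'_H \in \Hom_{\kk H}(P,P)$ with $\sum_{H\in\chi}\Tr^G_H(\beta'_H) = \Id_P$. Inserting this identity into $\alpha = \gamma \circ \Id_P \circ \delta$ and using the elementary compatibility $\gamma \circ \Tr^G_H(\beta) \circ \delta = \Tr^G_H(\gamma\circ\beta\circ\delta)$, valid for $\gamma,\delta$ $\kk G$-linear and $\beta$ $\kk H$-linear (it follows by commuting $\gamma$ and $\delta$ past the sum of conjugates defining $\Tr^G_H$, since $\gamma,\delta$ are $\kk G$-maps), we obtain $\alpha = \sum_{H\in\chi}\Tr^G_H(\beta_H)$ with $\beta_H := \gamma\circ\beta'_H\circ\delta \in \Hom_{\kk H}(M,N)$.

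The implication (iii) $\Rightarrow$ (i) is the one requiring real care. Given $\alpha = \sum_{H\in\chi}\Tr^G_H(\beta_H)$, I would work with the standard model $M\downarrow_H\uparrow^G = \kk G\otimes_{\kk H}M$ and two canonical $\kk G$-homomorphisms attached to each $H \in \chi$: the adjoint $\tilde\beta_H\colon \kk G\otimes_{\kk H}M \to N$, $g\otimes m \mapsto g\,\beta_H(m)$, and the "diagonal" map $\nu_H\colon M \to \kk G\otimes_{\kk H}M$, $m \mapsto \sum_{s\in S}s\otimes s^{-1}m$, where $S$ is a left transversal of $H$ in $G$. One checks routinely that $\nu_H$ is independent of the choice of $S$ and is $\kk G$-linear, and that $\tilde\beta_H$ is a well-defined $\kk G$-map. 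The crucial point is the identity $\tilde\beta_H\circ\nu_H = \Tr^G_H(\beta_H)$, which is immediate from the two formulas. Assembling the $\nu_H$ into a single map $\nu\colon M \to \bigoplus_{H\in\chi}M\downarrow_H\uparrow^G$ and the $\tilde\beta_H$ into $\tilde\beta\colon \bigoplus_{H\in\chi}M\downarrow_H\uparrow^G \to N$ then gives $\alpha = \tilde\beta\circ\nu$, which is precisely a factorization of the form demanded in (i).

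The only genuinely non-formal ingredient is the recognition of the relative trace $\Tr^G_H(\beta_H)$ as the composite "apply the diagonal map $\nu_H$, then apply the adjoint $\tilde\beta_H$"; everything else is bookkeeping with the definitions of trace, induction and restriction recalled earlier in this section. I therefore expect the main obstacle to be setting up $\nu_H$ correctly and verifying $\tilde\beta_H\circ\nu_H = \Tr^G_H(\beta_H)$, after which the lemma follows by a routine assembly of the summands.
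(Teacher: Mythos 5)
Your proof is correct and complete. The paper itself gives no argument: it simply cites Benson's Proposition~3.6.6, which is the single-subgroup version of this lemma, and remarks that it is easily deduced from that. Your write-up is essentially the standard proof of that cited result, carried out directly for an arbitrary collection $\chi$: the implication (ii)$\Rightarrow$(iii) via Higman's criterion together with the identity $\gamma\circ\Tr^G_H(\beta)\circ\delta=\Tr^G_H(\gamma\circ\beta\circ\delta)$ for $\kk G$-linear $\gamma,\delta$, and (iii)$\Rightarrow$(i) via the unit $\nu_H(m)=\sum_{s\in S}s\otimes s^{-1}m$ and counit $g\otimes m\mapsto g\beta_H(m)$ of the induction--restriction adjunction, whose composite is exactly $\Tr^G_H(\beta_H)$. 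All the verifications you flag (independence of transversal, $\kk G$-linearity, well-definedness of the adjoint map over $\kk H$) go through as you indicate, so your argument is a valid self-contained substitute for the citation, and arguably more informative than the paper's one-line reference.
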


\begin{proof} This is easily deduced from \cite[Proposition 3.6.6]{Benson1}.
\end{proof}

The above tells us that $\Hom_{\kk}(M,N)^{G,{\chi}}$ consists of the $\kk G$-homomorphsims which factor through a module which is projective relative to $\chi$. We write
\[\underline{\Hom}^{\chi}_{\kk G}(M,N):= \Hom_{\kk}(M,N)^G_{\chi}.\] 


Let $M$ be a $\kk G$-module and let $X$ be a $\kk G$-module that is projective relative to $\chi$. It is easily shown, using Proposition \ref{omnibus}, that $M \otimes X$ is projective relative to $\chi$. For example, the module $M \otimes X$ where $X = \bigoplus_{H \in \chi} \kk_{H} \uparrow^G$ is projective relative to $\chi$. Moreover, the natural map $\sigma: M \otimes X \rightarrow M$ given by
\[\sigma(m \otimes x)=m \] is a $\chi$-split $\kk G$-epimorphism (to see the splitting, use the Mackey Theorem). It follows that for each $M$, there exists a $\kk G$-module $Q_0$ which is projective relative to $\chi$ and a $\chi$-split $\kk G$-epimorphism $\pi_0: Q_0 \rightarrow M$. 

Let $\pi_0: Q_0 \rightarrow M$ and $\pi'_0: \rightarrow Q_0' \rightarrow M$ be two such pairs. The proof of Schanuel's Lemma (see \cite[Lemma~1.5.3, Lemma~3.9.1]{Benson1}) extends more or less verbatim to the relative case; if $K_0 = \ker(\pi)$ and $K_0' = \ker(\pi'_0)$ then $K_0 \oplus Q'_0 \cong K_0' \oplus Q_0$. 

If we choose among all such pairs, one in which the dimension of $Q_0$ is minimal, the kernel $K_0$ is defined uniquely. This pair $(Q_0,\pi_0)$ is called the relative projective cover of $M$. For this choice we set $\Omega_{\chi}(M) = K_0$. We can interate this construction, setting $\Omega_{\chi}^i(M) = \Omega_{\chi}(\Omega^{i-1}_{\chi}(M))$. Minimality implies that if $K_0'$ is the kernel of any other $\chi$-split $\kk G$-epimorphism $Q_0' \rightarrow M$, then $K_0 \cong \Omega_{\chi}(M) \oplus \textrm(rel. proj)$, where (rel. proj) is some module which is projective relative to $\chi$.

Dually, we always have that $M$ is a submodule of $M \otimes X$ with $X =  \bigoplus_{H \in \chi} \kk_{H} \uparrow^G$, and the inclusion $\rho: M \rightarrow M \otimes X$ splits on restriction to each $H \in \chi$. It follows that for each $M$, there exists a $\kk G$-module $J_0$ and a $\chi$-split $\kk G$-monomorphism $\rho_0: M \rightarrow J_0$. 

Let $\rho_0: M \rightarrow J_0$ and $\rho'_0: M \rightarrow J_0'$ be two such pairs. Again. by the relative version of Schanuel's Lemma, if $C_0 = \coker(\pi)$ and $C_0' = \coker(\pi'_0)$ then $C_0 \oplus J'_0 \cong C_0' \oplus J_0$. 

If we choose among all such pairs, one in which the dimension of $J_0$ is minimal, the cokernel $C_0$ is defined uniquely. The pair $(J_0,\rho_0)$ is called a relative injective hull of $M$ with respect to $\chi$. For this choice we set $\Omega^{-1}_{\chi}(M) = K_0$. We can interate this construction, setting $\Omega_{\chi}^{-i}(M) = \Omega^{-1}_{\chi}(\Omega^{-(i-1)}_{\chi}(M))$. Minimality implies that if $K_0'$ is the kernel of any other $\chi$-split $\kk G$-monomorphism $M \rightarrow J_0$, then $K'_0 \cong \Omega^{-1}_{\chi}(M) \oplus \textrm(rel. proj)$, where (rel. proj) is some module which is projective relative to $\chi$.

The following gives some properties of the operators $\Omega^i_{\chi}$.  

\begin{prop}\label{omegaomnibus} Let $M_1, M_2$ be $\kk G$-modules without summands which are  projective relative to $\chi$, and $i, j$ nonzero integers. Then:

\begin{enumerate}
\item[(i)] $\Omega^i_{\chi}(M_1 \oplus M_2) \cong \Omega^i_{\chi}(M_1) \oplus \Omega^i_{\chi}(M_2)$;
\item[(ii)] $\Omega_{\chi}^i(M)^* \cong \Omega^{-i}_{\chi}(M^*)$;
\item[(iii)] $M \cong \Omega_{\chi}(\Omega_{\chi}^{-1}(M)) \oplus$ (rel. proj) $\cong \Omega^{-1}_{\chi}(\Omega_{\chi}(M)) \oplus $ (rel. proj.).
\end{enumerate}
\end{prop}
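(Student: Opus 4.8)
The plan is to deduce all three parts from the uniqueness of relative projective covers and relative injective hulls, together with the fact that $(-)^\ast=\Hom_{\kk}(-,\kk)$ interchanges the two notions. The one genuinely non-formal input, which I would establish first, is that for \emph{any} $\kk G$-module $N$ the shift $\Omega_\chi(N)$ has no nonzero direct summand which is projective relative to $\chi$ (and dually for $\Omega^{-1}_\chi(N)$). Indeed, if $\Omega_\chi(N)=R\oplus L$ with $R$ relatively projective and $(Q_0,\pi_0)$ is the relative projective cover, then $R\hookrightarrow \Omega_\chi(N)\hookrightarrow Q_0$ is $\chi$-split (the first map splits over $\kk G$ as a summand inclusion; the second is $\chi$-split because $\pi_0$ is), and since $R$ is also relatively injective by Proposition \ref{omnibus} this inclusion splits over $\kk G$, so $Q_0\cong R\oplus Q_0'$. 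One checks that $\pi_0$ then induces a $\chi$-split epimorphism $Q_0'\to N$ from a relatively projective module, and minimality of $\dim Q_0$ forces $R=0$. The same bookkeeping, via the relative Schanuel lemma, gives the useful reformulation: a $\chi$-split epimorphism $\pi\colon Q\to N$ with $Q$ relatively projective is the relative projective cover precisely when $\ker\pi$ has no nonzero relatively projective summand.

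For (i) with $i=1$: if $\pi_t\colon Q_t\to M_t$ are the relative projective covers ($t=1,2$), then $\pi_1\oplus\pi_2\colon Q_1\oplus Q_2\to M_1\oplus M_2$ is a $\chi$-split epimorphism from a relatively projective module with kernel $\Omega_\chi(M_1)\oplus\Omega_\chi(M_2)$. By the preliminary fact and the Krull--Schmidt theorem this kernel has no nonzero relatively projective summand, so by the reformulation it is the relative projective cover, whence $\Omega_\chi(M_1\oplus M_2)\cong\Omega_\chi(M_1)\oplus\Omega_\chi(M_2)$. Iterating handles all $i>0$; the case $i<0$ is dual, using that a direct sum of relative injective hulls is a relative injective hull (or it follows from (ii)).

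For (ii): the functor $(-)^\ast$ is an exact contravariant self-duality on $\kk G$-modules; it sends $\chi$-split maps to $\chi$-split maps (dualise a $\kk H$-splitting), and it preserves relative projectivity because induction commutes with duality, $(M\downarrow_H\uparrow^G)^\ast\cong M^\ast\downarrow_H\uparrow^G$, so one may use characterisation (v) of Proposition \ref{omnibus}. Dualising the $\chi$-split exact sequence $0\to\Omega_\chi(M)\to Q_0\to M\to 0$ attached to the relative projective cover yields a $\chi$-split exact sequence $0\to M^\ast\to Q_0^\ast\to\Omega_\chi(M)^\ast\to 0$ presenting $M^\ast$ as a $\chi$-split submodule of the relatively injective module $Q_0^\ast$ with cokernel $\Omega_\chi(M)^\ast$; since $(-)^\ast$ preserves the property of having no relatively projective summand, $\Omega_\chi(M)^\ast$ has none, so this is the relative injective hull and $\Omega_\chi(M)^\ast\cong\Omega^{-1}_\chi(M^\ast)$. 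Induction on $|i|$, using $M^{\ast\ast}\cong M$ to cover negative $i$, gives the general statement.

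Finally (iii) is essentially immediate from the minimality statements already recorded before the proposition: the relative injective hull $0\to M\to J_0\to\Omega^{-1}_\chi(M)\to 0$ is in particular a $\chi$-split epimorphism $J_0\to\Omega^{-1}_\chi(M)$ from a relatively projective module with kernel $M$, so $M\cong\Omega_\chi(\Omega^{-1}_\chi(M))\oplus(\text{rel. proj.})$; symmetrically, the relative projective cover exhibits $\Omega_\chi(M)\hookrightarrow Q_0$ as a $\chi$-split monomorphism into a relatively injective module with cokernel $M$, giving $M\cong\Omega^{-1}_\chi(\Omega_\chi(M))\oplus(\text{rel. proj.})$. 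I expect the only real obstacle to be the preliminary lemma on absence of relatively projective summands in Heller shifts (needed to turn the ``$\oplus(\text{rel. proj.})$'' statements into honest isomorphisms in (i) and (ii)); the rest is routine manipulation with Krull--Schmidt and the duality $(-)^\ast$.
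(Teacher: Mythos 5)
Your proposal is correct and follows essentially the same route the paper intends: the relative version of Schanuel's Lemma together with Krull--Schmidt and the duality between relative projective covers and relative injective hulls, with your preliminary lemma (that $\Omega_{\chi}(N)$ and $\Omega^{-1}_{\chi}(N)$ have no nonzero relatively projective summands) simply making explicit the minimality facts the paper's one-line proof leaves implicit.
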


\begin{proof} (i) is obvious. (ii,iii) are easily deduced from the relative version of Schanuel's Lemma.
\end{proof}

(i) above shows that $\Omega_{\chi}^i$ is a well-defined operator on the set of indecomposable $\kk G$-modules which are not relatively projective to $\chi$. Note that (iii) does not say that $\Omega_{\chi} \circ \Omega^{-1}_{\chi}$ is the identity in general. If we define $\Omega^0_{\chi}(M)$ to be the direct sum of all summands of $M$ which are not projective relative to $\chi$, then we have $\Omega^{i+j} = \Omega_{\chi}^i \circ \Omega_{\chi}^j$ for all $i$ and $j$. 

The following result is sometimes useful.
\begin{Lemma}\label{relprojinduced} Let $M$ be a $\kk G$-module which is projective relative to a set $\chi$ of subgroups of $G$. Then $M^G = \sum_{H \in \chi} \Tr^G_H(M^H)$. 
\end{Lemma}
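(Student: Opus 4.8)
The plan is to prove the two inclusions separately. One of them is free: since $\Tr^G_H$ carries $M^H$ into $M^G$, we have $\sum_{H \in \chi} \Tr^G_H(M^H) \subseteq M^G$ with nothing to check.

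For the reverse inclusion I would invoke Higman's criterion, Proposition \ref{omnibus}(vii). Since $M$ is projective relative to $\chi$, there is a family $\{\beta_H \in \Hom_{\kk H}(M,M) : H \in \chi\}$ with $\sum_{H \in \chi} \Tr^G_H(\beta_H) = \Id_M$, where this transfer is taken with respect to the $G$-action on $\Hom_{\kk}(M,M)$ given by $(g\cdot\phi)(m) = g\phi(g^{-1}m)$. Now fix $x \in M^G$ and evaluate the identity at $x$: $x = \sum_{H \in \chi}\bigl(\Tr^G_H(\beta_H)\bigr)(x)$.

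The key computation is to unravel each term $\bigl(\Tr^G_H(\beta_H)\bigr)(x)$. Choosing a left transversal $S$ of $H$ in $G$, one has $\bigl(\Tr^G_H(\beta_H)\bigr)(x) = \sum_{s\in S} s\,\beta_H(s^{-1}x) = \sum_{s\in S} s\,\beta_H(x)$, using $x \in M^G$. Set $y_H := \beta_H(x)$. Then $y_H \in M^H$, because $\beta_H$ is $\kk H$-linear and $x$ is fixed by $H$, so $h y_H = \beta_H(hx) = \beta_H(x) = y_H$ for all $h \in H$. Hence $\bigl(\Tr^G_H(\beta_H)\bigr)(x) = \sum_{s\in S} s\,y_H = \Tr^G_H(y_H) \in \Tr^G_H(M^H)$, and summing over $H \in \chi$ gives $x \in \sum_{H \in \chi}\Tr^G_H(M^H)$, as required.

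There is no serious obstacle here; the only point requiring care is keeping the two transfers straight — the transfer on $\Hom_{\kk}(M,M)$ appearing in Higman's criterion versus the transfer on $M$ appearing in the statement — and checking that after evaluation at the fixed vector $x$ the former collapses to the latter applied to $\beta_H(x)$. That bookkeeping is exactly the computation above, and it is routine.
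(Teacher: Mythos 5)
Your proof is correct: the paper itself gives no argument for this lemma, deferring to \cite[Lemma~2.9]{ElmerSymPowers}, and your route via Higman's criterion (Proposition \ref{omnibus}(vii)) together with evaluation at a fixed vector is exactly the standard proof of that result, with the one delicate point (that the transfer on $\Hom_{\kk}(M,M)$ collapses, after evaluation at $x \in M^G$, to the transfer on $M$ applied to $\beta_H(x) \in M^H$) handled correctly.
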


\begin{proof} See \cite[Lemma~2.9]{ElmerSymPowers}
\end{proof}

As a consequence of the above, if $M = N \oplus$ (rel. proj.), we get that $M^G_{\chi} = N^G_{\chi}$.
The operators $\Omega^i_{\chi}$ extend in a natural way to homomorphisms between modules. Let $f \in \Hom_{\kk G}(M,N)$. Let $(Q,\pi),(Q',\pi')$ be the relative projective covers of $M,N$. Then the relative projectivity of $Q$ ensures the existence of a homomorphism $\bar{f} \in \Hom_{\kk G}(Q,Q')$ making the following diagram commute\\

\begin{tikzpicture}
\draw[->] (0.5,0) node[left] {$\Omega_{\chi}(M)$} -- (3,0) node [right] {$Q$};
\draw[->] (3.5,0) node[right]{} -- (6,0) node[right] {$M$};
\node[above] at (4.75,0){$\pi$};
\draw[->] (6.6,0) -- (9,0) node[right]{$0$};
\draw[->] (0.5,-3) node[left] {$\Omega_{\chi}(N)$} -- (3,-3) node [right] {$Q'$};
\draw[->] (3.5,-3) node[right]{} -- (6,-3) node[right] {$N$};
\node[above] at (4.75,-3){$\pi'$};
\draw[->] (6.6,-3) -- (9,-3) node[right]{$0$};
\draw[->] (6.3,-0.3) -- (6.3,-2.7);
\node[left] at (6,-1.5){$f$};
\draw[->,dashed] (3.3,-0.3) -- (3.3,-2.7);
\node[left] at (3,-1.5){$\bar{f}$};
\draw[->,dashed] (0.3,-0.3) -- (0.3,-2.7);
\node[left] at (0,-1.5){$\Omega_{\chi}(f)$};
\end{tikzpicture}

and an easy diagram chase shows that the image of $\Omega_{\chi}(f): \bar{f}|_{\ker(\pi)}$ is contained in $\ker(\pi')$. In this way, $f$ induces a homomorphism $\Omega_{\chi}(f) \in \Hom_{\kk G}(\Omega_{\chi}(M),\Omega_{\chi}(N))$. 

In a similar fashion, let $(J,\rho),(J',\rho')$ be the relative injective hulls of $M,N$ respectively. Then relative injectivity of $J'$ ensures the existence of a homomorphism $\tilde{f} \in \Hom(J,J')$ making the following diagram commute,

\begin{tikzpicture}
\draw[<-] (0.5,0) node[left] {$\Omega_{\chi}(M)$} -- (3,0) node [right] {$Q$};
\draw[<-] (3.5,0) node[right]{} -- (6,0) node[right] {$M$};
\node[above] at (4.75,0){$\rho$};
\draw[<-] (6.6,0) -- (9,0) node[right]{$0$};
\draw[<-] (0.5,-3) node[left] {$\Omega^{-1}_{\chi}(N)$} -- (3,-3) node [right] {$Q'$};
\draw[<-] (3.5,-3) node[right]{} -- (6,-3) node[right] {$N$};
\node[above] at (4.75,-3){$\rho'$};
\draw[<-] (6.6,-3) -- (9,-3) node[right]{$0$};
\draw[->] (6.3,-0.3) -- (6.3,-2.7);
\node[left] at (6,-1.5){$f$};
\draw[->,dashed] (3.3,-0.3) -- (3.3,-2.7);
\node[left] at (3,-1.5){$\tilde{f}$};
\draw[->,dashed] (0.3,-0.3) -- (0.3,-2.7);
\node[left] at (0,-1.5){$\Omega^{-1}_{\chi}(f)$};
\end{tikzpicture}

and a diagram chase shows that $\tilde{f}$ induces a well-defined homomorphism $\Omega_{\chi}^{-1}(f) \in \Hom(\Omega^{-1}_{\chi}(M), \Omega_{\chi}^{-1}(N))$. For a given homomorphism $f$, $\Omega_{\chi}^{-1}(\Omega_{\chi}(f))$ and $\Omega_{\chi}(\Omega_{\chi}^{-1}(f))$ are not equal to $f$ in general, but $f-\Omega_{\chi}^{-1}(\Omega_{\chi}(f))$ and $f-\Omega_{\chi}(\Omega_{\chi}^{-1}(f))$ factor through a module which is projective relative to $\chi$. By the discussion following Lemma \ref{factorsthrough}, we have

\begin{prop}\label{suspendhom} For all $i \in \Z$, $\Omega_{\chi}^i(-)$ induces an isomorphism 
\[\underline{\Hom}_{\kk G}^{\chi}(M,N) \cong \underline{\Hom}_{\kk G}^{\chi}(\Omega^i_{\chi}(M),\Omega^i_{\chi}(N)).\]
\end{prop}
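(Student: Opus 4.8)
The statement to prove is Proposition~\ref{suspendhom}: for all $i \in \Z$, the operator $\Omega^i_\chi$ induces an isomorphism $\underline{\Hom}^\chi_{\kk G}(M,N) \cong \underline{\Hom}^\chi_{\kk G}(\Omega^i_\chi(M), \Omega^i_\chi(N))$. The plan is to reduce to the cases $i = 1$ and $i = -1$, and then exploit the relation between $\Omega_\chi$ and $\Omega^{-1}_\chi$ coming from the relative Schanuel Lemma (Proposition~\ref{omegaomnibus}(iii)) together with the characterisation of maps factoring through relatively projective modules (Lemma~\ref{factorsthrough} and the discussion following it).

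First I would verify that $\Omega_\chi$ gives a \emph{well-defined} map on the quotients: if $f \in \Hom_{\kk G}(M,N)$ factors through a module projective relative to $\chi$, then so does $\Omega_\chi(f)$. This follows because $\Omega_\chi$ is additive and functorial on the level of the stable category --- more concretely, if $f = g \circ h$ with the intermediate module $P$ relatively projective, then $\Omega_\chi(f) = \Omega_\chi(g) \circ \Omega_\chi(h)$, and $\Omega_\chi(P)$ is a summand of a relatively projective module, hence itself relatively projective (one can take the relative projective cover of $P$ to be $P$ itself, so $\Omega_\chi(P) = 0$, or more carefully argue via Higman's criterion, Proposition~\ref{omnibus}(vii)). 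Hence $\Omega_\chi$ descends to a well-defined homomorphism $\underline{\Hom}^\chi_{\kk G}(M,N) \to \underline{\Hom}^\chi_{\kk G}(\Omega_\chi(M),\Omega_\chi(N))$, and symmetrically for $\Omega^{-1}_\chi$.

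Next I would show these two induced maps are mutually inverse. By the remarks preceding the proposition, for any $f$ the difference $f - \Omega^{-1}_\chi(\Omega_\chi(f))$ factors through a relatively projective module, so it is zero in $\underline{\Hom}^\chi_{\kk G}(M,N)$; thus the composite $\underline{\Omega^{-1}_\chi} \circ \underline{\Omega_\chi}$ is the identity on $\underline{\Hom}^\chi_{\kk G}(M,N)$. Similarly $f - \Omega_\chi(\Omega^{-1}_\chi(f))$ factors through a relatively projective module, giving $\underline{\Omega_\chi} \circ \underline{\Omega^{-1}_\chi} = \id$ on $\underline{\Hom}^\chi_{\kk G}(M,N)$. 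Here one should be slightly careful that $\Omega^{-1}_\chi(\Omega_\chi(M))$ is not literally $M$ but $M \oplus (\text{rel. proj})$; however, by the remark after Lemma~\ref{relprojinduced} and the evident fact that adding a relatively projective summand does not change the stable Hom group, this does no harm. This establishes the case $i = \pm 1$, and then iterating gives all $i \in \Z$ using $\Omega^{i+j}_\chi = \Omega^i_\chi \circ \Omega^j_\chi$ as noted after Proposition~\ref{omegaomnibus}.

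The main obstacle, such as it is, is bookkeeping rather than anything deep: one must consistently track the relatively projective summands that appear when composing $\Omega_\chi$ and $\Omega^{-1}_\chi$, and confirm that every time we pass to the stable Hom group $\underline{\Hom}^\chi_{\kk G}$ these summands become invisible --- precisely because $\Hom_{\kk G}(P, -)$ and $\Hom_{\kk G}(-, P)$ land inside $\Hom_{\kk}(-,-)^{G,\chi}$ when $P$ is relatively projective, by Lemma~\ref{factorsthrough}. Since the proof is a direct transcription of the absolute (ordinary stable module category) argument with "projective" replaced by "projective relative to $\chi$" and Schanuel's Lemma replaced by its relative version, I would expect the paper to state it tersely, citing the discussion already given in the text.
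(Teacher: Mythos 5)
Your proposal is correct and follows essentially the same route as the paper, which simply deduces the proposition from the preceding discussion: $f-\Omega_{\chi}^{-1}(\Omega_{\chi}(f))$ and $f-\Omega_{\chi}(\Omega_{\chi}^{-1}(f))$ factor through relatively projective modules, hence vanish in $\underline{\Hom}_{\kk G}^{\chi}$, and the case of general $i$ follows by iteration. Your extra care about well-definedness and about the superfluous relatively projective summands is consistent with, and slightly more detailed than, what the paper leaves implicit.
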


As explained in the introduction, the idea of a relatively projective cover can be extended to a relatively projective resolution; that is, an exact complex

\begin{equation} \ldots \rightarrow Q_i \rightarrow Q_{i-1} \rightarrow \ldots \rightarrow Q_0 \rightarrow M \rightarrow 0\end{equation}  
of relatively projective modules in which the connecting homomorphisms split over $\chi$. If

\begin{equation} \ldots \rightarrow Q'_i \rightarrow Q'_{i-1} \rightarrow \ldots \rightarrow Q'_0 \rightarrow M \rightarrow 0\end{equation}  
is another relatively projective resolution, then it turns out that any two chain maps between them are chain homotopic (see \cite[Theorem~3.9.3]{Benson1} for the version with $\chi$ consisting of one subgroup - the proof of the more general version is the same). Consequently, for any $\kk G$-module $N$, the homology groups of the induced complex

\[0 \rightarrow \Hom_{\kk G}(Q_0,M) \rightarrow \ldots \rightarrow \Hom_{\kk G}(Q_i, M) \rightarrow \ldots\]

are independent of the choice of resolution. The homology groups of this complex are by definition the relative Ext-groups $\Ext_{\kk G, \chi}^i(M,N)$. The relative cohomology of $G$ with respect to $\chi$ with coefficients in $N$ is the special case
\[H^i_{\chi}(G, N): = \Ext_{\kk G, \chi}^i(\kk, N).\]

We will use a minimal relative projective resolution of the trivial module to compute relative cohomology; that is, a relatively projective resolution
\begin{equation} \ldots \rightarrow Q_i \stackrel{\partial_{i-1}}{\rightarrow} Q_{i-1} \rightarrow \ldots \stackrel{\partial_0}{\rightarrow} Q_0 \rightarrow \kk \rightarrow 0.\end{equation}  
in which $\ker(\partial_{i-1}) = \Omega_{\chi}^i(\kk)$. We can construct this by taking for each $i$ a short exact sequence
\[0 \rightarrow \Omega^{i+1}_{\chi}(\kk) \stackrel{\rho_i}{\rightarrow} Q_{i} \stackrel{\pi_i}{\rightarrow} \Omega^{i}_{\chi}(\kk) \rightarrow 0\] and setting $\partial_i:= \rho_{i} \pi_{i+1}$. For each $i$ let 

$$\delta_i: \Hom_{\kk G}(Q_i,\kk) \rightarrow \Hom_{\kk G}(Q_{i+1}, \kk)$$
denote the map induced by $\partial_i$. 

Our main tool will be as following:

\begin{prop}\label{cohomtool} Let $N$ be a $\kk G$-module. Then we have
\begin{enumerate}
\item[(i)] $H^0_{\chi}(G,N) = N^G$;
\item[(ii)] $H^i_{\chi}(G,N) \cong \underline{\Hom}_{\kk G}^{\chi}(\Omega^i_{\chi}(\kk), N)$.
\end{enumerate}
\end{prop}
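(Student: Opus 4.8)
The plan is to compute directly with the minimal relative projective resolution $\cdots \to Q_i \xrightarrow{\partial_{i-1}} Q_{i-1} \to \cdots \to Q_0 \to \kk \to 0$ fixed above, exploiting the factorisations $\partial_i = \rho_i \circ \pi_{i+1}$ coming from the short exact sequences $0 \to \Omega^{i+1}_\chi(\kk) \xrightarrow{\rho_i} Q_i \xrightarrow{\pi_i} \Omega^i_\chi(\kk) \to 0$, where $\Omega^0_\chi(\kk) = \kk$ and $\pi_0 \colon Q_0 \to \kk$ is the augmentation. The first step is to pin down the cocycles. Given $g \in \Hom_{\kk G}(Q_i, N)$ one has $\delta_i(g) = g \circ \rho_i \circ \pi_{i+1}$; since $\pi_{i+1}$ is surjective this vanishes if and only if $g \circ \rho_i = 0$, that is, $g$ kills $\im\rho_i = \ker\pi_i$ and hence factors uniquely through $Q_i / \ker\pi_i \cong \Omega^i_\chi(\kk)$. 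So precomposition with $\pi_i$ yields an isomorphism $\pi_i^{*} \colon \Hom_{\kk G}(\Omega^i_\chi(\kk), N) \xrightarrow{\ \sim\ } \ker\delta_i$. For $i = 0$ this already proves (i): there is no incoming differential, so $H^0_\chi(G, N) = \ker\delta_0 \cong \Hom_{\kk G}(\kk, N) = N^G$.

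For $i \ge 1$ the second step is to transport $\im\delta_{i-1}$ through $\pi_i^{*}$. Since $\delta_{i-1}(f) = f \circ \rho_{i-1} \circ \pi_i$ and $\pi_i$ is surjective, the subspace $\im\delta_{i-1}$ corresponds under $\pi_i^{*}$ to exactly those $h \in \Hom_{\kk G}(\Omega^i_\chi(\kk), N)$ of the form $h = f \circ \rho_{i-1}$ with $f \in \Hom_{\kk G}(Q_{i-1}, N)$ — that is, those $h$ which extend along the $\chi$-split monomorphism $\rho_{i-1}\colon \Omega^i_\chi(\kk) \hookrightarrow Q_{i-1}$. The third step is to recognise this subspace as the space of homomorphisms $\Omega^i_\chi(\kk) \to N$ that factor through a module projective relative to $\chi$, which by Lemma \ref{factorsthrough} and the paragraph following it is precisely the subspace one quotients out to form $\underline{\Hom}^\chi_{\kk G}(\Omega^i_\chi(\kk), N)$. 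One inclusion is immediate, since $h = f \circ \rho_{i-1}$ visibly factors through $Q_{i-1}$, which is projective relative to $\chi$. For the reverse inclusion, if $h$ factors as $\Omega^i_\chi(\kk) \xrightarrow{\beta} R \xrightarrow{\gamma} N$ with $R$ projective relative to $\chi$, then $R$ is injective relative to $\chi$ by Proposition \ref{omnibus}, so the $\chi$-split mono $\rho_{i-1}$ lets us extend $\beta$ to some $\beta' \colon Q_{i-1} \to R$ with $\beta' \circ \rho_{i-1} = \beta$, whence $h = (\gamma\beta') \circ \rho_{i-1}$. Assembling the three steps, $\pi_i^{*}$ descends to an isomorphism $\underline{\Hom}^\chi_{\kk G}(\Omega^i_\chi(\kk), N) \cong \ker\delta_i / \im\delta_{i-1} = H^i_\chi(G, N)$, which is (ii).

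The only genuinely delicate point — and the step I expect to be the main obstacle — is the reverse inclusion in the third step: passing from ``$h$ factors through some relatively projective module'' to ``$h$ extends along the particular $\chi$-split monomorphism $\rho_{i-1}$''. This is exactly where the characterisation of relative projectivity as relative injectivity (Proposition \ref{omnibus}, (i) $\Leftrightarrow$ (iii)) is needed, and where it matters that the connecting maps $\rho_{i-1}$ in a relative projective resolution are $\chi$-split, not merely injective. Everything else is elementary manipulation of the factorisations $\partial_i = \rho_i\pi_{i+1}$ together with surjectivity of the $\pi_i$ and injectivity of the $\rho_i$, plus the routine verification that $\pi_i^{*}$ is a well-defined linear bijection onto $\ker\delta_i$.
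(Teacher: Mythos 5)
Your proof is correct and takes essentially the same approach as the paper: identify $\ker\delta_i$ with $\Hom_{\kk G}(\Omega^i_{\chi}(\kk),N)$ via (pre)composition with $\pi_i$, then identify $\im\delta_{i-1}$ with the homomorphisms factoring through a module projective relative to $\chi$, using Lemma \ref{factorsthrough} together with the equivalence of relative projectivity and relative injectivity. The only cosmetic difference is that you extend along the $\chi$-split monomorphism $\rho_{i-1}$ using relative injectivity of the intermediate module $R$, whereas the paper phrases this via relative injectivity coming from Proposition \ref{omnibus}; the mechanism is the same.
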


The proof is the same as in the case $\chi = \{1\}$, but we give a sketch for lack of a good reference to this proof.

\begin{proof}
We first show that for each $i \geq 0$, $$\ker(\delta_i)\cong \Hom_{\kk G}(\Omega^i_{\chi}(\kk),N).$$
To see this, let $\phi \in \ker(\delta_i) \subseteq \Hom_{\kk G}(Q_i,N)$. For $x \in \Omega^i_{\chi}(\kk)$, choose $q \in Q_i$ such that $\pi_i(q) = x$ and define $\hat{\phi} (x) = \phi(q)$. Then $\hat{\phi} \in \Hom_{\kk G}(\Omega^i_{\chi}(\kk, N))$. The assignment $\phi \rightarrow \hat{\phi}$ is well-defined: for if $q' \in Q_i$ with $\pi_i(q') = x$ and $\tilde{\phi}(x):= \phi(q')$, then since $q-q' \in \ker(\pi_i)$ we get $q-q' \in \im(\partial_i)$ and $\phi(q-q') = 0$ since $\phi \in \ker(\delta_i)$. Conversely, given $\phi \in \Hom_{\kk G}(\Omega^i_{\chi}(\kk), N)$ we can define $\hat{\phi} = \phi \circ \pi_i \in \ker(\delta_i)$. It's easy to see that the two assignments are inverse to each other.

This in particular shows that (i) holds, since $\Hom_{\kk G}(\kk, N) \cong N^G$. We now show that $\im(\delta_{i-1})$ consists of the homomorphisms in $\Hom_{\kk G}(\Omega^i(\kk), N)$ which factor through a module which is projective relative to $\chi$. To see this, first suppose $\phi \in \im(\delta_{i-1}) \subseteq \Hom_{\kk G}(Q_i, N)$, say $\phi = \psi \circ \partial_{i-1}$ where $\psi \in \Hom_{\kk G}(Q_{i-1}, N)$.
Then with $x \in \Omega^i_{\chi}(\kk)$ and $q, \hat{\phi}$ as before we note that
\[\psi \circ \rho_{i-1}(x) = \psi \circ \rho _{i-1}\circ \pi_i (q) = \psi \circ \partial_i (q) = \phi(q) = \hat{\phi}(x) \] which shows that $\hat{\phi}$ factors through the module $Q_{i-1}$ which is projective relative to $\chi$. Conversely, if $\phi \in \Hom_{\kk G}(\Omega^i_{\chi}, \kk)$ factors through any module which is projective relative to $\chi$, then it factors through $Q_{i-1}$, because $\rho_{i-1}$ is injective and $Q_{i-1}$ is also an injective module with respect to $\chi$ by Lemma \ref{omnibus}.

\end{proof}

One can define a pairing $\smile: H_{\chi}^i(G,\kk) \otimes H_{\chi}^j(G,\kk) \rightarrow H_{\chi}^{i+j}(G,\kk)$ in a few different ways. On the one hand, elements of $H_{\chi}^*(G,\kk) = \Ext^*_{\kk G, \chi}(\kk,\kk)$ can be viewed as equivalence classes of extensions of $\kk$ by $\kk$ split over $\chi$, and the usual Yoneda splice gives the required pairing; see \cite[Section~2.6,3.9]{Benson1} for details in the case $\chi$ consisting of only one subgroup. Some other constructions in the case $\chi = \{1\}$ are given in \cite{CarlsonCohom}, and all of these extend in a natural way to arbitrary $\chi$. Happily, all these methods give the same construction. In the present article we will use the following construction: recall that

\[H_{\chi}^i(G,\kk) \cong \underline{\Hom}_{\kk G}^{\chi}(\Omega^i_{\chi}(\kk), \kk).\]
Similarly
\[H_{\chi}^j(G,\kk) = \underline{\Hom}_{\kk G}^{\chi}(\Omega^j_{\chi}(\kk), \kk) \cong \underline{\Hom}_{\kk G}^{\chi}(\Omega^{i+j}_{\chi}(\kk),  \Omega^i_{\chi}(\kk)) \]
with the second isomorphism arising from Proposition \ref{suspendhom}. Therefore we may define a product as follows: for $\alpha \in H_{\chi}^i(G,\kk)$ and $\beta \in H_{\chi}^j(G,\kk)$ choose $f \in \Hom_{\kk G}(\Omega^i_{\chi}(\kk),\kk)$, $g \in \Hom_{\kk G}(\Omega^j_{\chi}(\kk),\kk)$ respresenting $\alpha, \beta$ respectively. Then $\Omega^i_{\chi}(g) \in \Hom_{\kk G}(\Omega_{\chi}^{i+j}(\kk),\Omega^i_{\chi}(\kk))$, so that 
\[f \circ \Omega^i_{\chi}(g) \in \Hom_{\kk G}(\Omega^{i+j}_{\chi}(\kk), \kk).\]

We take $\alpha \smile \beta$ to be the cohomology class represented by $f \circ \Omega^i_{\chi}(g)$. This is called the \emph{cup product} of $\alpha$ and $\beta$.

\section{Representations of $C_2 \times C_2$}

In this section, let $G = \langle \sigma, \tau \rangle$ denote the Klein four-group, and let $\kk$ be a field of characteristic 2 (not necessarily algebraically closed). We set $\chi = \{H_1,H_2,H_3\}$, the set of all proper nontrivial subgroups of $G$, where $H_1 = \langle \sigma \rangle, H_2 = \langle \tau \rangle, H_3 = \langle \sigma \tau \rangle$. 

Let $X:= \sigma - 1 \in \kk G$, $Y:= \tau - 1 \in \kk G$. Then $X^2=Y^2=0$, $\kk G$ is isomorphic to the quotient ring
\[R:= \kk[X,Y]/(X^2,Y^2),\] and $\kk G$-modules can be viewed as $R$ modules.
We will describe $R$-modules by means of the diagrams for modules first introduced by Alperin in \cite{AlperinDiagrams}. In these diagrams, nodes represent basis elements, and two nodes labelled $a$ and $b$ are joined by a south-west directed arrow if $Xa=b$, and by a south-east directed arrow if $Ya=b$. If no south-west arrow begins at $a$ then it is understood that $Xa=0$, similarly for $Y$.

Our statement of the classification of $\kk G$-modules resembles that found in \cite{Kleinfoursurvey} and we recommend this reference as an easily accessible proof.

\begin{prop}\label{classify} Let $M$ be an indecomposable $\kk G$-module. Then $M$ is isomorphic to one of the following:

\begin{enumerate}
\item The module $V_{2n+1}$ $(n \geq 0)$, with odd dimension $2n+1$ and diagram

\begin{tikzpicture}
\draw[fill] (0,0) circle[radius = 0.05];
\node[above] at (0,0) {$a_0$};
\draw[fill] (2,0) circle[radius = 0.05];
\node[above] at (2,0) {$a_1$};
\draw[fill] (1,-1) circle[radius = 0.05];
\node[below] at (1,-1) {$b_1$};
\node at (3,-0.75){$\ldots$};

\draw[fill] (4,0) circle[radius = 0.05];
\node[above] at (4,0) {$a_{n-1}$};
\draw[fill] (6,0) circle[radius = 0.05];
\node[above] at (6,0) {$a_n$};
\draw[fill] (5,-1) circle[radius = 0.05];
\node[below] at (5,-1) {$b_n$};

\draw[->] (0,0) -- (0.5,-0.5);
\draw (0.5,-0.5) -- (1,-1);
\draw[->] (2,0) -- (1.5,-0.5);
\draw (1.5,-0.5) -- (1,-1);
\draw[->] (2,0) -- (2.5,-0.5);

\draw[->] (4,0) -- (4.5,-0.5);
\draw (4.5,-0.5) -- (5,-1);
\draw[->] (6,0) -- (5.5,-0.5);
\draw (5.5,-0.5) -- (5,-1);
\draw[->] (4,0) -- (3.5,-0.5);
\end{tikzpicture}

\item The module $V_{-(2n+1)}$ ($n \geq 0$), with odd dimension $2n+1$ and diagram

\begin{tikzpicture}

\draw[fill] (0,-1) circle[radius = 0.05];
\node[below] at (0,-1) {$b_0$};
\draw[fill] (2,-1) circle[radius = 0.05];
\node[below] at (2,-1) {$b_1$};
\draw[fill] (1,0) circle[radius = 0.05];
\node[above] at (1,0) {$a_1$};
\node at (3,-0.75){$\ldots$};

\draw[fill] (4,-1) circle[radius = 0.05];
\node[below] at (4,-1) {$b_{n-1}$};
\draw[fill] (6,-1) circle[radius = 0.05];
\node[below] at (6,-1) {$b_n$};
\draw[fill] (5,0) circle[radius = 0.05];
\node[above] at (5,0) {$a_n$};

\draw[->] (1,0) -- (1.5,-0.5);
\draw (1.5,-0.5) -- (2,-1);
\draw[->] (1,0) -- (0.5,-0.5);
\draw (0.5,-0.5) -- (0,-1);
\draw (2,-1) -- (2.5,-0.5);

\draw[->] (5,0) -- (5.5,-0.5);
\draw (5.5,-0.5) -- (6,-1);
\draw[->] (5,0) -- (4.5,-0.5);
\draw (4.5,-0.5) -- (4,-1);
\draw (3.5,-0.5) -- (4,-1);

\end{tikzpicture}

Note that $V_1 \cong V_{-1} \cong \kk$, with trivial $G$-action, but otherwise these modules are pairwise non-isomorphic.

\item The module $V_{2n,\infty}$, $(n \geq 1)$, with even dimension $2n$ and diagram

\begin{tikzpicture}

\draw[fill] (0,-1) circle[radius = 0.05];
\node[below] at (0,-1) {$b_1$};
\draw[fill] (2,-1) circle[radius = 0.05];
\node[below] at (2,-1) {$b_2$};
\draw[fill] (1,0) circle[radius = 0.05];
\node[above] at (1,0) {$a_1$};
\node at (3,-0.5){$\ldots$};

\draw[fill] (4,0) circle[radius = 0.05];
\node[above] at (4,0) {$a_{n-1}$};
\draw[fill] (6,0) circle[radius = 0.05];
\node[above] at (6,0) {$a_n$};
\draw[fill] (5,-1) circle[radius = 0.05];
\node[below] at (5,-1) {$b_n$};

\draw[->] (1,0) -- (1.5,-0.5);
\draw (1.5,-0.5) -- (2,-1);
\draw[->] (1,0) -- (0.5,-0.5);
\draw (0.5,-0.5) -- (0,-1);
\draw (2,-1) -- (2.5,-0.5);

\draw[->] (4,0) -- (4.5,-0.5);
\draw (4.5,-0.5) -- (5,-1);
\draw[->] (6,0) -- (5.5,-0.5);
\draw (5.5,-0.5) -- (5,-1);
\draw (4,0) -- (3.5,-0.5);

\end{tikzpicture}

\item The module $V_{2n,\theta}$, $(n \geq 1)$, with even dimension $2n$ and diagram,

\begin{tikzpicture}
\draw[fill] (0,0) circle[radius = 0.05];
\node[above] at (0,0) {$a_1$};
\draw[fill] (2,0) circle[radius = 0.05];
\node[above] at (2,0) {$a_2$};
\draw[fill] (1,-1) circle[radius = 0.05];
\node[below] at (1,-1) {$b_1$};
\node at (3,-0.5){$\ldots$};

\draw[->] (0,0) -- (0.5,-0.5);
\draw (0.5,-0.5) -- (1,-1);
\draw[->] (2,0) -- (1.5,-0.5);
\draw (1.5,-0.5) -- (1,-1);
\draw (2,0) -- (2.5,-0.5);

\draw[dashed,->] (0,0) -- (0,-1);
\node[left] at (0,-0.5) {$\theta$};

\draw[fill] (4,-1) circle[radius = 0.05];
\node[below] at (4,-1) {$b_{n-1}$};
\draw[fill] (6,-1) circle[radius = 0.05];
\node[below] at (6,-1) {$b_n$};
\draw[fill] (5,0) circle[radius = 0.05];
\node[above] at (5,0) {$a_n$};

\draw[->] (5,0) -- (5.5,-0.5);
\draw (5.5,-0.5) -- (6,-1);
\draw[->] (5,0) -- (4.5,-0.5);
\draw (4.5,-0.5) -- (4,-1);
\draw (4,-1) -- (3.5,-0.5);
\end{tikzpicture}

Here, $\theta(x) = \sum_{i=0}^n \lambda_i x^{n-i}$ is a power of an irreducible monic polynomial with coefficients in $\kk$ and the dotted line labelled by $\theta$ indicates that $Xa_1 = \sum_{i=1}^n {\lambda_i} b_i$.

\item The projective indecomposable module $P$, with dimension 4 and diagram

\begin{tikzpicture}
\draw[fill] (0,-1) circle[radius = 0.05];
\node[left] at (0,-1) {$b_1$};
\draw[fill] (2,-1) circle[radius = 0.05];
\node[right] at (2,-1) {$b_2$};
\draw[fill] (1,0) circle[radius = 0.05];
\node[above] at (1,0) {$a$};
\draw[fill] (1,-2) circle[radius = 0.05];
\node[below] at (1,-2) {$c$};

\draw[->] (1,0) -- (1.5,-0.5);
\draw (1.5,-0.5) -- (2,-1);
\draw[->] (1,0) -- (0.5,-0.5);
\draw (0.5,-0.5) -- (0,-1);
\draw[->] (2,-1) -- (1.5,-1.5);
\draw (1.5,-1.5) -- (1,-2);
\draw[->] (0,-1) -- (0.5,-1.5);
\draw (0.5,-1.5) -- (1,-2);

\end{tikzpicture}

\end{enumerate}
 
\end{prop}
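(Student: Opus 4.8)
The plan is to reduce the statement to the classical classification of the indecomposable representations of the Kronecker quiver $\bullet \rightrightarrows \bullet$, which is available over an arbitrary field. Identify $\kk G$ with $R = \kk[X,Y]/(X^2,Y^2)$: this is a local $\kk$-algebra with $\operatorname{rad} R = (X,Y)$, $\operatorname{rad}^2 R = (XY)$, $\operatorname{rad}^3 R = 0$, and one-dimensional socle $(XY)$; in particular $R$ is symmetric, hence self-injective, and every nonzero submodule of $R$ contains $\operatorname{soc} R$. The first reduction is that an indecomposable $M$ is either isomorphic to $R$ --- this gives $P$ --- or satisfies $XYM = 0$: if $XYm \neq 0$ for some $m \in M$, then the annihilator of $m$ is an ideal not containing $XY = \operatorname{soc} R$, hence zero, so $Rm \cong R$ is a free (hence injective) submodule and therefore a direct summand, forcing $M \cong R$.

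Assume now $XYM = 0$, so that $M$ is a module over $S := \kk[X,Y]/(X,Y)^2$. Here $\operatorname{rad} M = XM + YM$ and $\operatorname{soc} M = \ker X \cap \ker Y$ are semisimple with $\operatorname{rad} M \subseteq \operatorname{soc} M$. The second reduction peels off trivial summands: if the inclusion $\operatorname{rad} M \subseteq \operatorname{soc} M$ is proper, choose a nonzero $m$ in a vector-space complement of $\operatorname{rad} M$ inside $\operatorname{soc} M$; then $\kk m$ is a trivial submodule, and, because $XM, YM \subseteq \operatorname{rad} M$, any vector-space complement of $\kk m$ in $M$ containing $\operatorname{rad} M$ is automatically an $S$-submodule, so $\kk m$ splits off. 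After splitting off copies of the trivial module $\kk = V_1 = V_{-1}$, we may therefore assume $\operatorname{soc} M = \operatorname{rad} M$.

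For such an $M$, put $V := \operatorname{rad} M$, $U := M/\operatorname{rad} M$, and let $x, y : U \to V$ be induced by $X, Y$. Choosing a vector-space splitting of $V \hookrightarrow M$ shows that $M \mapsto (U \rightrightarrows V)$ underlies an equivalence between the category of $S$-modules with $\operatorname{soc} = \operatorname{rad}$ and the full subcategory of Kronecker modules on those $(U \rightrightarrows V)$ with $(x,y) : U \to V \oplus V$ injective and $\im x + \im y = V$; these are exactly the Kronecker modules with no direct summand $(\kk \rightrightarrows 0)$ or $(0 \rightrightarrows \kk)$. Hence the indecomposable $M$ with $XYM = 0$ and no trivial summand are in bijection with all indecomposable Kronecker modules other than those two. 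Feeding in the field-independent Kronecker classification --- the preprojectives of dimension vector $(n+1,n)$ and the preinjectives of dimension vector $(n,n+1)$ for $n \geq 1$, and the regular modules indexed by a closed point of $\mathbb{P}^1_{\kk}$ (an irreducible monic polynomial $\theta$, or $\infty$) together with a multiplicity --- and passing to total dimension, one obtains the four infinite families: the preprojectives give the odd-dimensional $V_{2n+1}$ (top-heavy, with $n+1$ "$a$"-nodes and $n$ "$b$"-nodes), the preinjectives give the odd-dimensional $V_{-(2n+1)}$, and the regular modules give the even-dimensional $V_{2n,\theta}$ and $V_{2n,\infty}$.

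What remains is to pass from the abstract Kronecker normal forms to the explicit diagrams in the statement, and this is where the concrete content --- and the main obstacle --- lies: for each Kronecker indecomposable one must choose bases of $U$ and $V$ realizing its canonical matrices, lift them to a basis of $M$, and verify that $X$ and $Y$ then act by exactly the displayed zigzag (respectively $\theta$-twisted) pattern, with the tube over $\infty$ yielding $V_{2n,\infty}$ and a tube over $\theta$ yielding $V_{2n,\theta}$; pairwise non-isomorphism is then inherited from the Kronecker side, together with the separate bookkeeping of $P$ and $\kk$. The two points demanding care are keeping a non-algebraically-closed $\kk$ honest in the regular part (closed points of $\mathbb{P}^1_{\kk}$, not elements of $\kk$) and carrying out the basis-matching without slips. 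An alternative that avoids invoking the Kronecker classification as a black box is to run the Gelfand--Ponomarev functorial-filtration method directly on the special biserial algebra $\kk[X,Y]/(X^2,Y^2)$, producing the string modules $V_{\pm(2n+1)}$, the band modules $V_{2n,\theta}$ and $V_{2n,\infty}$, and the single projective-injective $P$ --- more self-contained, but longer.
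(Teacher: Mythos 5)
Your strategy is sound, and it is worth noting that the paper itself does not prove this proposition at all: it simply points the reader to the survey cited as an accessible source, so you are not diverging from an argument in the paper but reconstructing the standard proof that the literature (and, in essence, that reference) uses. Your reductions are correct and adequately justified: since $R=\kk[X,Y]/(X^2,Y^2)$ is local and self-injective with one-dimensional socle $(XY)$, any $m$ with $XYm\neq 0$ generates a free, hence injective, hence direct summand, isolating $P$; modules killed by $XY$ live over $\kk[X,Y]/(X,Y)^2$, where the splitting-off of trivial summands when $\operatorname{rad}M\subsetneq\operatorname{soc}M$ works exactly as you say; and the passage to Kronecker representations $(x,y)\colon U\rightrightarrows V$ with $(x,y)$ injective and $\im x+\im y=V$ correctly excludes only the two simple Kronecker modules (a subrepresentation concentrated at one vertex always splits off, as the complement conditions are vacuous). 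One small imprecision: the functor $M\mapsto(U\rightrightarrows V)$ is full and dense onto that subcategory and reflects isomorphisms, but it is not faithful, so "equivalence" should be weakened to a representation equivalence --- this is all you need, since it gives a bijection on isomorphism classes of indecomposables compatible with direct sums. The genuinely unexecuted part is the one you flag yourself: transcribing the Kronecker normal forms (preprojective/preinjective, and regular modules indexed by closed points of $\mathbb{P}^1_{\kk}$ with a multiplicity, which is what keeps a non-algebraically-closed $\kk$ honest and produces $\theta$ a power of a monic irreducible) into the explicit zigzag and $\theta$-twisted diagrams, together with the pairwise non-isomorphism bookkeeping. That step is routine basis manipulation, so I would count your proposal as a correct proof outline whose completion is mechanical; what it buys over the paper's approach is self-containedness (modulo the classical Kronecker/matrix-pencil classification, or, as you note, the Gelfand--Ponomarev string-and-band method if one wants to avoid even that black box), at the cost of length that the paper avoids by citation.
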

 
The following, also taken from \cite{Kleinfoursurvey}, may be proved directly from the classification above.

\begin{prop}\label{dual} Let $M$ be an indecomposable $\kk G$-module. Then we have

\begin{enumerate} 
\item $M \cong M^*$ if $M$ is even-dimensional.
\item $M^* \cong V_{-(2n+1)}$ if $M \cong V_{2n+1}$ is odd dimensional.
\item $M^* \cong V_{2n+1}$ if $M \cong V_{-(2n+1)}$ is odd-dimensional.
\end{enumerate}

\end{prop}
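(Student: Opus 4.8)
The plan is to read the dual directly off the structure of each module in Proposition \ref{classify}. Since $G$ has exponent $2$ the contragredient action satisfies $(g\cdot\phi)(m)=\phi(g^{-1}m)=\phi(gm)$, so $X=\sigma-1$ and $Y=\tau-1$ act on $M^{*}=\Hom_{\kk}(M,\kk)$ as the transposes of their actions on $M$. In the language of Alperin diagrams this says exactly that the diagram of $M^{*}$ is the diagram of $M$ with every arrow reversed (equivalently, drawn upside down); for the band module $V_{2n,\theta}$ one checks that the single relation $Xa_{1}=\sum_{i}\lambda_{i}b_{i}$ dualizes to a relation of the same shape, and that the resulting module is again of type $V_{2n,\theta}$. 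Two consequences will be used: $M\mapsto M^{*}$ is an exact, dimension-preserving, involutive duality permuting the indecomposables of Proposition \ref{classify}, and it interchanges $\dim(M/\operatorname{rad}M)$ with $\dim\operatorname{soc}(M)$, because $\operatorname{soc}(M^{*})\cong(M/\operatorname{rad}M)^{*}$.

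For the odd-dimensional statements (2) and (3): by Proposition \ref{classify} the only indecomposables of dimension $2n+1$ are $V_{2n+1}$ and $V_{-(2n+1)}$, and inspecting their diagrams gives $(\dim M/\operatorname{rad}M,\ \dim\operatorname{soc}M)=(n+1,n)$ for $V_{2n+1}$ and $(n,n+1)$ for $V_{-(2n+1)}$. Hence $V_{2n+1}^{*}$ is a $(2n+1)$-dimensional indecomposable whose socle has dimension $n+1$, which forces $V_{2n+1}^{*}\cong V_{-(2n+1)}$; applying $(-)^{*}$ again gives $V_{-(2n+1)}^{*}\cong V_{2n+1}$. (For $n=0$ all three modules equal $\kk$ and the statements are consistent.) One could instead identify the arrow-reversal of the diagram of $V_{2n+1}$ with that of $V_{-(2n+1)}$ by hand, but the invariant $\dim\operatorname{soc}$ makes the identification immediate.

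For the even-dimensional statement (1): $P$ is the unique $4$-dimensional indecomposable with simple socle (it is the indecomposable projective, $P\cong\kk G$, and $\kk G$ is symmetric), a property preserved by duality, so $P^{*}\cong P$. Each of $V_{2n,\infty}$ and $V_{2n,\theta}$ can be realized on $W\oplus W$ with $\dim W=n$, one of $X,Y$ acting as $\bigl(\begin{smallmatrix}0&0\\ I&0\end{smallmatrix}\bigr)$ and the other as $\bigl(\begin{smallmatrix}0&0\\ N&0\end{smallmatrix}\bigr)$, where $N$ is a single nilpotent Jordan block (the $\infty$ case) or a companion matrix of $\theta$; an isomorphism of two such modules amounts exactly to a conjugacy of the corresponding matrices $N$. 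Transposing all operators and swapping the two copies of $W$ identifies $M^{*}$ with the module built from $N^{\mathsf T}$, and since every square matrix over a field is conjugate to its transpose, $M^{*}\cong M$. The only real obstacle is bookkeeping: matching the diagram conventions of Proposition \ref{classify} to these matrix presentations and confirming the dualized band relation is again of type $\theta$. Once that dictionary is fixed, each case above is a one-line verification.
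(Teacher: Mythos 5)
Your argument is correct. Note that the paper itself gives no proof of this proposition: it cites the Klein-four survey and remarks only that the statement ``may be proved directly from the classification above,'' so your write-up supplies exactly the kind of direct verification the paper alludes to. The two ingredients you use are the right ones: since $G$ has exponent $2$, the actions of $X$ and $Y$ on $M^{*}$ are the transposes of their actions on $M$, so dualizing flips the Alperin diagram, and the invariant pair $(\dim M/\operatorname{rad}M,\ \dim\operatorname{soc}M)$, which duality interchanges, pins down the odd-dimensional cases and the self-duality of $P$. The only genuinely non-formal point is case (1) for the modules $V_{2n,\theta}$ and $V_{2n,\infty}$: a priori dualizing could return $V_{2n,\theta'}$ for a different polynomial $\theta'$, and your matrix-pair model (normalize the invertible one of $X,Y$ to the identity, so the module is determined by the conjugacy class of the other matrix $N$) together with the fact that $N^{\mathsf T}$ is similar to $N$ disposes of this correctly; just make sure, when fixing the dictionary with Proposition \ref{classify}, that transposition is applied to $X$ and $Y$ separately so their roles are not interchanged, which is what guarantees the dual is again of the same type ($\infty$ stays $\infty$, $\theta$ stays $\theta$).
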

Clearly (3) follows from (2) above, but we include it for completeness. In addition,

\begin{prop}\label{hellerstandard} Let $M$ be an indecomposable $\kk G$-module. Then we have

\begin{enumerate} 
\item $\Omega(M) \cong M$ if $M$ is even-dimensional.
\item $\Omega^{-1}(M) \cong V_{-(2n+3)}$ if $M \cong V_{-(2n+1)}$ is odd dimensional.
\item $\Omega(M) \cong V_{2n+3}$ if $M \cong V_{2n+1}$ is odd-dimensional.
\end{enumerate}
\end{prop}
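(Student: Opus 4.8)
The plan is to read off each (relative) Heller shift directly from its definition, using that $\kk G\cong R=\kk[X,Y]/(X^2,Y^2)$ is a local self-injective ring whose only indecomposable projective is $P$, the module in part~(5) of Proposition~\ref{classify} (that is, $R$ viewed as a module over itself). For any $\kk G$-module $M$ the projective cover is a surjection $P^{\oplus d}\to M$ with $d=\dim_\kk M/JM$, where $J=(X,Y)\subseteq R$ is the radical; hence $\Omega(M)$ is its kernel, with $\dim_\kk\Omega(M)=4d-\dim_\kk M$. Dually, since $P$ is also the unique indecomposable injective, the injective hull is $M\hookrightarrow P^{\oplus e}$ with $e=\dim_\kk\operatorname{soc}(M)$, and $\dim_\kk\Omega^{-1}(M)=4e-\dim_\kk M$. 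Inspecting the diagrams in Proposition~\ref{classify} one reads off $\dim(M/JM)=\dim\operatorname{soc}(M)=n$ for the two even-dimensional families, $\dim\bigl(V_{2n+1}/JV_{2n+1}\bigr)=n+1$, and $\dim\operatorname{soc}(V_{-(2n+1)})=n+1$, which already yields the dimensions $\dim M$, $2n+3$, $2n+3$ asserted in (1)--(3). Finally I would deduce (2) from (3) by duality: since $V_{2n+1}^*\cong V_{-(2n+1)}$ and $V_{2n+3}^*\cong V_{-(2n+3)}$ by Proposition~\ref{dual}, the case $\chi=\{1\}$ of Proposition~\ref{omegaomnibus}(ii) gives
\[\Omega^{-1}\bigl(V_{-(2n+1)}\bigr)\cong\Omega^{-1}\bigl(V_{2n+1}^*\bigr)\cong\bigl(\Omega(V_{2n+1})\bigr)^*\cong V_{2n+3}^*\cong V_{-(2n+3)}.\]
So it remains to prove (1) and (3).

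For (3), let $a_0,\dots,a_n$ be the basis labelling the top row of the diagram of $V_{2n+1}$, and let $g_0,\dots,g_n$ be free generators of $P^{\oplus(n+1)}$. The map $\varphi\colon P^{\oplus(n+1)}\to V_{2n+1}$, $g_i\mapsto a_i$, induces an isomorphism of tops, so it is the projective cover. Using the relations $Xa_0=Ya_n=0$, $Xa_i=b_i$ and $Ya_{i-1}=b_i$ for $1\le i\le n$, and $XYa_i=0$, visible in the diagram, one checks that
\[\Omega(V_{2n+1})=\ker\varphi=\bigl\langle\, Xg_0,\ Yg_n,\ XYg_0,\dots,XYg_n,\ Xg_1-Yg_0,\dots,Xg_n-Yg_{n-1}\,\bigr\rangle,\]
which has dimension $2n+3$. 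A short computation then gives $J\ker\varphi=\langle XYg_0,\dots,XYg_n\rangle$, shows that the classes of $Xg_0$, of $Xg_i-Yg_{i-1}$ $(1\le i\le n)$, and of $Yg_n$ form a basis of $\ker\varphi/J\ker\varphi$, and shows that $X$ and $Y$ carry this basis into $\{XYg_0,\dots,XYg_n\}$ in exactly the pattern of the diagram of $V_{2n+3}$. Hence $\Omega(V_{2n+1})\cong V_{2n+3}$ (for $n=0$ this reads $\Omega(\kk)\cong V_3$).

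For (1), note first that every even-dimensional indecomposable $M$ has $\operatorname{soc}(M)=JM=\langle b_1,\dots,b_n\rangle$, so $J^2M=0$. Let $\varphi\colon P^{\oplus n}\to M$, $g_i\mapsto a_i$, be the projective cover. Since $\ker\varphi\subseteq JP^{\oplus n}$ and $XYa_i=0$, an element $\sum_i(\alpha_iXg_i+\beta_iYg_i+\gamma_iXYg_i)$ lies in $\ker\varphi$ iff $\sum_i\alpha_iXa_i+\sum_i\beta_iYa_i=0$ in $M$; writing $X,Y\colon\kk^n\to\kk^n$ for the matrices by which $X$ and $Y$ map $\langle a_1,\dots,a_n\rangle$ into $\langle b_1,\dots,b_n\rangle$, this identifies
\[\Omega(M)=\ker\varphi\;\cong\;\{(\alpha,\beta)\in\kk^n\oplus\kk^n:X\alpha=Y\beta\}\ \oplus\ \kk^n,\]
where $X$ acts by $(\alpha,\beta,\gamma)\mapsto(0,0,\beta)$, $Y$ by $(\alpha,\beta,\gamma)\mapsto(0,0,\alpha)$, and the last summand $\langle XYg_1,\dots,XYg_n\rangle=\operatorname{soc}(\ker\varphi)$. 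Now normalise the basis of Proposition~\ref{classify} so that one of the two matrices is the identity: $V_{2n,\theta}$ has $Ya_i=b_i$ for all $i$, and $V_{2n,\infty}$ has $Xa_i=b_i$ for all $i$. Then $\{(\alpha,\beta):X\alpha=Y\beta\}$ is the graph of the other matrix, so $\ker\varphi/\operatorname{soc}(\ker\varphi)\cong\kk^n$, and relative to this identification $X$ and $Y$ act on $\ker\varphi$ by the \emph{same} pair of matrices as on $M$. Hence $\Omega(M)\cong M$; in diagram terms, the Alperin diagram of $\ker\varphi$ is literally that of $M$, with the same polynomial $\theta$ in the $V_{2n,\theta}$ case.

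All of this is mechanical; the only step requiring care — and the only likely source of error — is the final identification of each kernel with a \emph{specific} module in Proposition~\ref{classify}, since the dimension counts above do not pin the module down. In the $V_{2n,\theta}$ case this amounts to checking that the parameter $\theta$ is unchanged, which is precisely what the normalisation ``$Y=\operatorname{id}$ on the top row'' secures: with that choice $\ker\varphi$ again has $Y$ acting invertibly on its top, with the same matrix of $X$ relative to it, so no transformation of $\theta$ can occur.
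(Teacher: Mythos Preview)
Your proof is correct. The paper itself does not prove this proposition: it states the result as taken from \cite{Kleinfoursurvey}, adding only the one-line remark that (3) follows from (2) via $\Omega(M)^*\cong\Omega^{-1}(M^*)$. Your argument is therefore more than what the paper supplies; you give a self-contained direct computation of the kernels using the radical/socle structure of $R=\kk[X,Y]/(X^2,Y^2)$ and the explicit diagrams, and you run the duality argument in the opposite direction (proving (3) first and deducing (2)), which is of course equivalent. Your treatment of (1) via the ``graph of the other matrix'' description of $\ker\varphi$ is a clean way to see that the parameter $\theta$ is preserved, which is exactly the point the dimension count alone misses.
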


Again (3) follows from (2) when we take into account that $\Omega(M)^* \cong \Omega^{-1}(M^*)$ in general.

\subsection{Relative shifts}
The goal of this subsection is to prove Theorem \ref{heller}.

Among the indecomposable $\kk G$-modules listed in the previous section, only four are projective relative to $\chi$. These are the projective indecomposable $P$, and the three modules $V_{2,\infty}$, $V_{2,x}$ and $V_{2,x+1}$. Here the last two are the indecomposable modules $V_{2,\theta}$ where $\theta(x)$ is the monic irreducible $x$ or $x+1 \in \kk[x]$. Note that $\tau$ acts trivially on $V_{2,\infty} = \kk_{H_2} \uparrow^G$, while $\sigma$ acts trivially on $V_{2,x} = \kk_{H_1} \uparrow^G$ and $\sigma \tau$ acts trivially on $V_{2,x+1} = \kk_{H_3} \uparrow^G$. As these three play in important role in what follows, we denote them by $Q_{\tau}, Q_{\sigma}$ and $Q_{\sigma \tau}$ respectively. We set $Q = Q_{\sigma} \oplus Q_{\tau} \oplus Q_{\sigma \tau}$. 

We begin by considering odd-dimensional modules. 
\begin{Lemma}\label{negodd} Let $n \geq 0$:
\begin{enumerate} 
\item The relative projective cover of $V_{-(2n+1)}$ is $Q \oplus nP$.
\item We have $\Omega_{\chi}(V_{-(2n+1)}) \cong V_{-(2n+5)}$.
\end{enumerate} 
\end{Lemma}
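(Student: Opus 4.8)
The plan is to construct an explicit $\chi$-split short exact sequence exhibiting $Q \oplus nP$ as the relative projective cover of $V_{-(2n+1)}$, and to identify the kernel from the diagram. First I would set up coordinates: write $V_{-(2n+1)}$ with basis $\{b_0,\ldots,b_n,a_1,\ldots,a_n\}$ as in Proposition \ref{classify}(2), where $Xa_i=b_{i-1}$, $Ya_i=b_i$, and all other products vanish. Recall that $Q_\sigma=\kk_{H_1}\uparrow^G$ has basis say $\{u,Yu\}$ with $Xu=0$; $Q_\tau=\kk_{H_2}\uparrow^G$ has basis $\{v,Xv\}$ with $Yv=0$; $Q_{\sigma\tau}=\kk_{H_3}\uparrow^G$ has basis $\{w,(X+Y)w\}$ with $(X+Y)w$ the socle element; and $P$ has the diamond diagram with top $a$, middle $b_1,b_2$, socle $c$. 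The surjection $\pi_0\colon Q\oplus nP\to V_{-(2n+1)}$ should send the socle generators of the three relatively projective summands onto $b_0, b_n$ and (a combination covering) an interior $b_i$, while the $n$ copies of $P$ map their diamond tops onto $a_1,\ldots,a_n$; the key point is to choose $\pi_0$ so that it is surjective and splits over each $H_j$. Surjectivity over $H_j$ is where one uses Lemma \ref{relprojinduced}/Higman-type bookkeeping: since the source is projective relative to $\chi$, $\chi$-splitness of $\pi_0$ is equivalent (via Proposition \ref{omnibus}) to checking the relevant restriction maps split, which for a rank-1-induced module reduces to a dimension count on fixed points.

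Next I would compute the kernel $K_0=\ker\pi_0$ directly from the combined diagram. The source has dimension $2\cdot 3 + 4n = 4n+6$ and the target has dimension $2n+1$, so $\dim K_0 = 2n+5$; the claim is that $K_0\cong V_{-(2n+5)}$. To see this, I would exhibit a basis of $K_0$ and read off the $X$- and $Y$-action, checking it matches the diagram of $V_{-(2n+1+4)}=V_{-(2n+5)}$ — a ``zig-zag'' string with $2n+5$ nodes, $b$'s at the bottom and $a$'s on top. Concretely the socle of $K_0$ is spanned by the socle elements of the $Q$-summands and the $c$'s of the $P$'s minus the appropriate combinations mapping to the same image, giving $n+3$ bottom nodes and $n+2$ top nodes, which is exactly the shape of $V_{-(2n+5)}$. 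One then checks $K_0$ is indecomposable (it has simple socle or can be matched node-by-node with the standard diagram), which forces the isomorphism and, since $\dim(Q\oplus nP)$ is the minimum possible, confirms that this pair really is the \emph{relative projective cover} (so that $K_0=\Omega_\chi(V_{-(2n+1)})$, not merely $K_0\cong\Omega_\chi\oplus(\text{rel.\ proj.})$). Minimality I would argue by noting any relatively projective module surjecting $\chi$-splitly onto an odd-dimensional $M$ must have even dimension and must ``cover'' each of the three subgroup directions, forcing at least the three $2$-dimensional summands, plus $n$ copies of $P$ to account for the non-socle basis vectors.

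The main obstacle I expect is the bookkeeping in the indecomposability/minimality step: verifying that the explicitly described kernel is genuinely $V_{-(2n+5)}$ and not, say, $V_{-(2n+3)}\oplus Q_?$ or some other same-dimensional module, and that the chosen $Q\oplus nP$ is of minimal dimension among all $\chi$-split relatively projective covers. Part (2) is then essentially a corollary of part (1): once $\Omega_\chi(V_{-(2n+1)})\cong V_{-(2n+5)}$ is established for the relative projective cover, there is nothing more to prove. An alternative, cleaner route for part (2) would be to invoke Theorem \ref{heller} combined with Proposition \ref{hellerstandard}: $V_{-(2n+1)}$ is odd-dimensional, so $\Omega_\chi \cong \Omega^{-2}$, and $\Omega^{-1}(V_{-(2n+1)})\cong V_{-(2n+3)}$, hence $\Omega^{-2}(V_{-(2n+1)})\cong V_{-(2n+5)}$ — but since this lemma is presumably used to \emph{prove} Theorem \ref{heller}, I will keep the self-contained diagram computation and only use Proposition \ref{hellerstandard} as a sanity check.
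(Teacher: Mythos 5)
Your overall strategy (explicit $\chi$-split surjection from $Q\oplus nP$, compute the kernel from the diagram, identify it as $V_{-(2n+5)}$, argue minimality via restrictions to the $H_j$) is the same as the paper's, but the explicit map you propose does not exist, and this is a genuine gap rather than a cosmetic one. With your coordinates on $M=V_{-(2n+1)}$ (where $Xa_i=b_{i-1}$, $Ya_i=b_i$ and every $b_i$ is killed by $X$ and $Y$), one has $\ker X|_M=\ker Y|_M=\ker(X+Y)|_M=M^G=\langle b_0,\dots,b_n\rangle$. Consequently any $\kk G$-homomorphism $Q_\sigma\to M$ must send the generator $u$ into $\ker X=M^G$ (because $Xu=0$ forces $X\pi(u)=0$), and then the socle element satisfies $\pi(Yu)=Y\pi(u)=0$; the same argument applies to $Q_\tau$ and $Q_{\sigma\tau}$. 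So the socle generators of the three $Q$-summands are annihilated by \emph{every} $\kk G$-map into $V_{-(2n+1)}$, and your prescription ``socles of the $Q$'s map onto $b_0$, $b_n$ and an interior $b_i$'' defines no homomorphism at all. (It would be fine if the target were $V_{2n+1}$, where $a_0\in\ker X\setminus M^G$; you appear to have the wrong orientation of the string in mind.) Since the kernel computation and the $\chi$-splitness check are built on this map, they do not stand as written.

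The repair is exactly the paper's construction: the $n$ copies of $P$ map their tops to $a_1,\dots,a_n$ (so their middle nodes already cover all of $b_0,\dots,b_n$ and $nP\to M$ is surjective for $n\ge 1$), while the \emph{top} generators of $Q_\sigma,Q_\tau,Q_{\sigma\tau}$ are sent to suitable fixed vectors ($b_0$, $b_n$, $b_0$) and their socles to $0$. The role of the $Q$-summands is thus not surjectivity but $\chi$-splitness: $M\downarrow_{H_j}\cong\kk_{H_j}\oplus n\,\kk H_j$ has a trivial summand, whereas $nP\downarrow_{H_j}$ is free, so the surjection from $nP$ alone cannot split over any $H_j$; this is also the correct mechanism behind your minimality claim (it is about trivial summands in the restrictions, not about ``covering'' socle vectors). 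Splitness is then verified, as in the paper, by comparing $\ker(\pi)\downarrow_{H_j}\oplus M\downarrow_{H_j}$ with $(Q\oplus nP)\downarrow_{H_j}$ as $\kk H_j$-modules. Your count of the kernel's shape ($n+3$ bottom nodes, $n+2$ top nodes, dimension $2n+5$, hence $V_{-(2n+5)}$ once indecomposability is checked) is correct and matches the paper, and your remark that one cannot quote Theorem \ref{heller} here without circularity is well taken; but as submitted the central homomorphism must be corrected before the rest of the argument can be carried out.
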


\begin{proof} Let $M \cong V_{-(2n+1)}$ and let $\pi: N \rightarrow M$ be its relative projective cover with respect to $\chi$. $N$ must decompose as a direct sum of modules of the form $P$, $Q_{\sigma}$, $Q_{\tau}$ and $Q_{\sigma \tau}$.  

Let $a_1,a_2, \ldots, a_n, b_0, b_1, \ldots, b_n$ be a basis of $M$, with action given by the diagram as in Proposition \ref{classify}. Since $\pi$ is a surjective $\kk G$-map and no $a_i$ is fixed by any element of $G$, the same must be true of their unique pre-images. The modules $Q_{\sigma}$, $Q_{\tau}$ and $Q_{\sigma \tau}$ all have non-trivial kernels. Therefore $N$ contains at least $n$ copies of $P$.

On the other hand, we have, for any $i$,
\begin{equation}\label{Mres} M \downarrow_{H_i} \cong \kk_{H_i} \oplus n \kk H_i \end{equation}
The restrictions to $H_1$ of $P$, $Q_{\tau}$ and $Q_{\sigma \tau}$ contain no trivial $H_1$-summands. So $N$ must contain a direct summand isomorphic to $Q_{\sigma}$  if $\pi$ is to split on restriction to $H_1$. A similar argument (restricting to $H_2, H_3$) shows that $N$ must contain summands isomorphic to $Q_{\tau}$ and $Q_{\sigma \tau}$.

We will construct a surjective $\kk G$-homomorphism $Q \oplus nP \rightarrow M$. The following diagrams label the basis elements:

\begin{tikzpicture}
\draw[fill] (0,0) circle[radius=0.05];
\draw[fill] (1,-1) circle[radius=0.05];
\draw[->] (0,0) -- (0.5,-0.5);
\draw (0.5,-0.5) -- (1,-1);
\node at (0.5,-2) {$Q_{\sigma}$};
\node[above] at (0,0) {$r_1$};
\node[below] at (1,-1) {$s_1$};

\draw[fill] (3,0) circle[radius=0.05];
\draw[fill] (2,-1) circle[radius=0.05];
\draw[->] (3,0) -- (2.5,-0.5);
\draw (2.5,-0.5) -- (2,-1);
\node at (2.5,-2) {$Q_{\tau}$};
\node[above] at (3,0) {$r_2$};
\node[below] at (2,-1) {$s_2$};

\draw[fill] (4.5,0) circle[radius=0.05];
\node[above] at (4.5,0) {$r_3$};
\draw[fill]  (4.5,-1) circle[radius=0.05];
\node[below] at (4.5,-1) {$s_3$};
\draw[->] (4.5,0) arc (135:180:0.707);
\draw (4.293,-0.5) arc (180:225:0.707);
\draw (4.5,-1) arc (-45:0:0.707);
\draw[<-] (4.707,-0.5) arc (0:45:0.707);
\node at (4.5,-2) {$Q_{\sigma \tau}$};

\draw[fill] (6,-1) circle[radius = 0.05];
\node[left] at (6,-1) {$x_1$};
\draw[fill] (8,-1) circle[radius = 0.05];
\node[right] at (8,-1) {$y_1$};
\draw[fill] (7,0) circle[radius = 0.05];
\node[above] at (7,0) {$w_1$};
\draw[fill] (7,-2) circle[radius = 0.05];
\node[below] at (7,-2) {$z_1$};

\draw[->] (7,0) -- (7.5,-0.5);
\draw (7.5,-0.5) -- (8,-1);
\draw[->] (7,0) -- (6.5,-0.5);
\draw (6.5,-0.5) -- (6,-1);
\draw[->] (8,-1) -- (7.5,-1.5);
\draw (7.5,-1.5) -- (7,-2);
\draw[->] (6,-1) -- (6.5,-1.5);
\draw (6.5,-1.5) -- (7,-2);

\node at (9,-1) {$\ldots$};

\draw[fill] (10,-1) circle[radius = 0.05];
\node[left] at (10,-1) {$x_n$};
\draw[fill] (12,-1) circle[radius = 0.05];
\node[right] at (12,-1) {$y_n$};
\draw[fill] (11,0) circle[radius = 0.05];
\node[above] at (11,0) {$w_n$};
\draw[fill] (11,-2) circle[radius = 0.05];
\node[below] at (11,-2) {$z_n$};

\draw[->] (11,0) -- (11.5,-0.5);
\draw (11.5,-0.5) -- (12,-1);
\draw[->] (11,0) -- (10.5,-0.5);
\draw (10.5,-0.5) -- (10,-1);
\draw[->] (12,-1) -- (11.5,-1.5);
\draw (10.5,-1.5) -- (11,-2);
\draw[->] (10,-1) -- (10.5,-1.5);
\draw (11.5,-1.5) -- (11,-2);

\end{tikzpicture}
(the diagram for $Q_{\sigma \tau}$ is not as described in Proposition \ref{classify}, but self-explanatory.) We now define a linear map $\pi: Q \oplus nP \rightarrow M$ by
\begin{itemize}
\item $\pi(w_i) = a_i$ for $i=1, \ldots, n$.
\item $\pi(x_i) = b_{i-1}$ for   $i=1, \ldots, n$.
\item $\pi(y_i) = b_i$  for $i=1, \ldots, n$.
\item $\pi(z_i) = 0$ for  $i=1, \ldots, n$.
\item $\pi(s_i) = 0$ for $i=1,2,3$.
\item $\pi(r_1) = \pi(r_3) = a_0$.
\item $\pi(r_2) = a_n$.
\end{itemize}
 
The reader should check that $\pi$ is a $\kk G$-homomorphism. The kernel of $\pi$ is spanned by 
$$\{z_i: i = 1, \ldots, n\} \cup \{ s_1,s_2,s_3\} \cup \{x_i+y_{i-1}: i=2, \ldots, n\} \cup \{x_1+r_1,x_1+r_3,y_n+r_2\}.$$
It has dimension $2n+5$, and the fixed-point space within this module is spanned by $\{z_1,z_2, \ldots, z_n,s_1,s_2,s_3\}$, so it has dimension $n+3$. It is easily checked that no element of the kernel outside of the fixed-point space is fixed by any subgroup $H_i$. Therefore $$\ker(\pi) \downarrow_{H_i} \cong \kk_{H_i} \oplus (n+2)\kk H_i$$ for any $i$. 
This, combined with (\ref{Mres}) and the fact that
$$(Q \oplus nP) \downarrow_{H_i} \cong 2 \kk_{H_i} \oplus (2n+2) \kk H_i$$ shows that $\pi$ splits on restriction to any $H_i$. The construction ensures the minimality of $Q \oplus nP$, so $Q \oplus nP = N$, proving (1). Further, $\Omega_{\chi}(M) = \ker(\pi)$, and the classification of $\kk G$-modules, together with the fact that $\ker(\pi)$ must be indecomposable, implies that $\ker(\pi) \cong V_{-(2n+5)}$, proving (2).  
\end{proof}

The following follows immediately the above using Propositions \ref{dual} and \ref{omegaomnibus}(3).
\begin{Lemma} Let $n \geq 0$:
Then we have $\Omega_{\chi}(V_{(2n+5)}) \cong V_{(2n+1)}$.
\end{Lemma}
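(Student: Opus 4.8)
The plan is to reduce this to Lemma~\ref{negodd}(2) by $\kk$-linear duality, in exact analogy with the way Proposition~\ref{hellerstandard}(3) is obtained from Proposition~\ref{hellerstandard}(2).

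First I would record the consequences of Proposition~\ref{dual} that are needed: every even-dimensional indecomposable $\kk G$-module is self-dual, so in particular the four indecomposables that are projective relative to $\chi$ --- namely $P$, $Q_{\sigma}$, $Q_{\tau}$, $Q_{\sigma\tau}$ --- are self-dual, and $V_{-(2m+1)}^{*}\cong V_{2m+1}$ for all $m\ge 0$. Next I would apply the duality formula $\Omega_{\chi}^{i}(M)^{*}\cong\Omega_{\chi}^{-i}(M^{*})$ of Proposition~\ref{omegaomnibus}(ii) with $i=1$ and $M=V_{-(2n+1)}$. Combining this with Lemma~\ref{negodd}(2) and the identifications above gives
\[
\Omega_{\chi}^{-1}(V_{2n+1})\;\cong\;\Omega_{\chi}^{-1}\bigl(V_{-(2n+1)}^{*}\bigr)\;\cong\;\Omega_{\chi}\bigl(V_{-(2n+1)}\bigr)^{*}\;\cong\;V_{-(2n+5)}^{*}\;\cong\;V_{2n+5}.
\]
Then I would apply $\Omega_{\chi}$ to both ends of $\Omega_{\chi}^{-1}(V_{2n+1})\cong V_{2n+5}$ and invoke Proposition~\ref{omegaomnibus}(iii), obtaining $\Omega_{\chi}(V_{2n+5})\cong\Omega_{\chi}\bigl(\Omega_{\chi}^{-1}(V_{2n+1})\bigr)\cong V_{2n+1}\oplus(\text{rel.\ proj.})$. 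To finish I would observe that $V_{2n+1}$ is indecomposable and, being odd-dimensional, is not projective relative to $\chi$; hence by Krull--Schmidt the relatively projective summand vanishes and $\Omega_{\chi}(V_{2n+5})\cong V_{2n+1}$.

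I do not expect a genuine obstacle here: once Lemma~\ref{negodd} is in hand the argument is purely formal. The one point that needs care is that $\Omega_{\chi}$ and $\Omega_{\chi}^{-1}$ are not mutually inverse in general --- Proposition~\ref{omegaomnibus}(iii) only identifies $\Omega_{\chi}\bigl(\Omega_{\chi}^{-1}(-)\bigr)$ with the identity up to a relatively projective summand --- so the indecomposability of $V_{2n+1}$ together with the fact that every indecomposable projective relative to $\chi$ is even-dimensional is precisely what is used to discard that summand. It is also worth noting that the argument applies unchanged at $n=0$, where the statement reads $\Omega_{\chi}(V_{5})\cong\kk$ and $\kk=V_{1}$ is indeed not projective relative to $\chi$.
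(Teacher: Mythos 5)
Your proposal is correct and is essentially the paper's own argument: the paper deduces this lemma in one line from Lemma~\ref{negodd} via Proposition~\ref{dual} and Proposition~\ref{omegaomnibus}, exactly the duality-plus-$\Omega_{\chi}\Omega_{\chi}^{-1}$ route you spell out. One tiny wording point: Proposition~\ref{omegaomnibus}(iii) states $V_{2n+1}\cong\Omega_{\chi}\bigl(\Omega_{\chi}^{-1}(V_{2n+1})\bigr)\oplus(\text{rel.\ proj.})$ rather than the reversed decomposition you wrote, but your Krull--Schmidt/indecomposability argument disposes of the relatively projective summand either way, so the proof stands.
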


To complete the picture for odd-dimensional modules, it remains only to show that 
\begin{Lemma} Let $M \cong V_3$. Then:
\begin{enumerate}
\item The relative projective cover of $M$ is $Q$;
\item We have $\Omega_{\chi}(M) \cong V_{-3}$.
\end{enumerate}
\end{Lemma}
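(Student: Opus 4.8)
The plan is to run the argument of Lemma~\ref{negodd} in the degenerate case $n=0$, where no copy of $P$ appears. Fix $M\cong V_3$ with basis $a_0,a_1,b_1$ as in Proposition~\ref{classify} (so $Xa_0=0$, $Ya_0=b_1$, $Xa_1=b_1$, $Ya_1=0$). First I would record a lower bound on the relative projective cover: each restriction $M\downarrow_{H_i}$ is $\kk_{H_i}\oplus\kk H_i$, while among the relatively projective indecomposables only $Q_\sigma$ has a trivial summand on restriction to $H_1$ (the restrictions of $P$, $Q_\tau$, $Q_{\sigma\tau}$ to $H_1$ being free). Hence a $\chi$-split epimorphism $N\to M$ with $N$ relatively projective can split over $H_1$ only if $Q_\sigma\mid N$, and the same argument over $H_2$, $H_3$ gives $Q_\tau\mid N$ and $Q_{\sigma\tau}\mid N$. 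So $Q$ is a direct summand of any relative projective cover of $M$, and such a cover has dimension at least $6$.

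Next I would exhibit a $\chi$-split epimorphism $\pi\colon Q\to M$. With the generators $r_1\in Q_\sigma$, $r_2\in Q_\tau$, $r_3\in Q_{\sigma\tau}$ and the elements $s_j$ labelled as in the proof of Lemma~\ref{negodd} (so $Yr_1=s_1$, $Xr_2=s_2$, $Xr_3=Yr_3=s_3$, and all $s_j$ are $G$-fixed), define
\[
\pi(r_1)=a_0,\qquad \pi(r_2)=a_1,\qquad \pi(r_3)=a_0+a_1,\qquad \pi(s_1)=\pi(s_2)=\pi(s_3)=b_1 .
\]
Checking compatibility with $X$ and $Y$ on each generator shows $\pi$ is a $\kk G$-homomorphism, and it is surjective because $a_0,a_1,b_1$ lie in its image. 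To see it is $\chi$-split I would compute $\ker(\pi)\downarrow_{H_i}\cong\kk_{H_i}\oplus\kk H_i$ for each $i$; since $Q\downarrow_{H_i}\cong 2\kk_{H_i}\oplus 2\kk H_i$ and $M\downarrow_{H_i}\cong\kk_{H_i}\oplus\kk H_i$, comparing the numbers of trivial and of free indecomposable summands shows $\pi\downarrow_{H_i}$ splits, exactly as in Lemma~\ref{negodd}. (Equivalently: each $\pi(r_j)$ is $H_j$-fixed and $\pi(s_j)=b_1$, so $\pi$ maps $Q^{H_i}$ onto $M^{H_i}$ for each $i$, which already forces $\pi\downarrow_{H_i}$ to split.)

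Finally, since $Q$ is projective relative to $\chi$ and admits the $\chi$-split epimorphism $\pi$, the relative projective cover of $M$ has dimension at most $6$; with the lower bound above this forces it to be $Q$, which is (1), and gives $\Omega_\chi(M)=\ker\pi$. A direct computation gives $\ker\pi=\langle\, r_1+r_2+r_3,\; s_1+s_2,\; s_2+s_3\,\rangle$, of dimension $3$. Writing $a:=r_1+r_2+r_3$, $b:=Xa=s_2+s_3$ and $b':=Ya=s_1+s_3$, one checks $Xb=Yb=Xb'=Yb'=0$, so $\ker\pi$ has exactly the diagram of $V_{-3}$ in Proposition~\ref{classify}, proving (2). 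There is no real obstacle here — everything reduces to short linear algebra over $\kk$ — the one point needing care being the choice of $\pi$: sending $r_3$ to the diagonal element $a_0+a_1$ rather than into the socle $\langle b_1\rangle$ is precisely what makes $\pi$ split on restriction to $H_3$, and is the small feature distinguishing $n=0$ from the $n\ge 1$ case of Lemma~\ref{negodd}.
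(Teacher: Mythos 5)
Your proposal is correct and follows essentially the same route as the paper: the same forced summand argument via the restrictions $M\downarrow_{H_i}\cong\kk_{H_i}\oplus\kk H_i$, the identical map $\pi(r_1)=a_0$, $\pi(r_2)=a_1$, $\pi(r_3)=a_0+a_1$, $\pi(s_j)=b_1$, the same kernel basis $\{r_1+r_2+r_3,\,s_1+s_2,\,s_2+s_3\}$ identified with $V_{-3}$ via $X(r_1+r_2+r_3)=s_2+s_3$, $Y(r_1+r_2+r_3)=s_1+s_3$, and the same splitting-by-counting and minimality conclusions. The parenthetical shortcut (surjectivity of $\pi$ on $H_i$-fixed points forces splitting over $\kk H_i\cong\kk[t]/(t^2)$) is a valid extra observation but does not change the argument.
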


\begin{proof} We have $M \downarrow_{H_i} \cong \kk_{H_i} \oplus \kk H_i$, for $i=1,2,3$, so once more the projective cover must contain a summand isomorphic to $Q$. We shall construct a $\kk G$-homomorphism $\pi: Q \rightarrow M$.  We retain the notation for a basis of $Q$ used in Lemma \ref{negodd}; a basis for $M$ is $\{a_0,a_1,b_1\}$ with action given as in the classification.

Define:
\begin{itemize}
\item $\pi(r_1) =a_0$ 
\item $\pi(r_2) = a_1$
\item $\pi(r_3) = a_0+a_1$.
\item $\pi(s_1)=\pi(s_2)=\pi(s_3)=b_1.$
\end{itemize}
The reader should check this is a $\kk G$-homomorphism. The kernel of $\pi$ is spanned by $\{s_1+s_2,s_2+s_3,r_1+r_2+r_3\}$, and the fixed-point space of the kernel is two-dimensional, spanned by $\{s_1+s_3, s_2+s_3\}$. Noting that 
$$X(r_1+r_2+r_3) = s_2+s_3, Y(r_1+r_2+r_3) = s_1+s_3,$$ we see that the kernel of $\pi$ is indecomposable, and as a $\kk G$-module is isomorphic to $V_{-3}$. Therefore
\[\ker(\pi)_{H_i} \oplus \kk_{H_i} \oplus \kk H_i\] for all $i$, from which we deduce that $\pi$ splits on restriction to each $H_i$. Our construction ensures the minimality of $Q$, so $Q$ is indeed the relative projective cover of $M$, proving (1), and $\ker(\pi) = \Omega_{\chi}(M) \cong V_{-3}$, proving (2).
\end{proof}

We now turn to even dimensional modules. Note that $V_{2, \infty} = Q_{\tau}$ is already projective relative to $\chi$, so $\Omega_{\chi}(V_{2,\infty})$ is not defined.

\begin{Lemma}\label{evenheller} Let $n \geq 2$ and $M \cong V_{2n, \infty}$. Then:
\begin{enumerate}
\item The relative projective cover of $M$ is $2Q_{\tau} \oplus (n-1) P$;
\item We have $\Omega_{\chi} (M) \cong M$.
\end{enumerate}
\end{Lemma}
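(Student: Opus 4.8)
The plan is to follow the proof of Lemma~\ref{negodd} almost verbatim: exhibit an explicit $\chi$-split epimorphism onto $M$ from the relatively projective module $D := 2Q_\tau \oplus (n-1)P$, identify its kernel by the classification, and then read off both (1) and (2) from the relative Schanuel lemma. Throughout, write $M = V_{2n,\infty}$ with its diagram basis $a_1,\dots,a_n,b_1,\dots,b_n$, so that $Xa_i = b_i$ for all $i$, $Ya_i = b_{i+1}$ for $i<n$, $Ya_n = 0$ and $Xb_j = Yb_j = 0$.

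First I would construct the map. Label the two copies of $Q_\tau$ with bases $\{r_1,s_1\}$ and $\{r_2,s_2\}$ (so $Xr_k = s_k$, $Yr_k = 0$) and the $i$-th copy of $P$ with basis $\{a^{(i)},b_1^{(i)},b_2^{(i)},c^{(i)}\}$ as in Proposition~\ref{classify}. Define $\pi\colon D \to M$ by $\pi(a^{(i)}) = a_i$ for $1\le i\le n-1$, $\pi(r_1) = a_n$ and $\pi(r_2) = b_1$; $\kk G$-linearity then forces $\pi(b_1^{(i)}) = b_i$, $\pi(b_2^{(i)}) = b_{i+1}$, $\pi(c^{(i)}) = 0$, $\pi(s_1) = b_n$, $\pi(s_2) = 0$, and one checks that $\pi$ is a well-defined $\kk G$-epimorphism. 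A direct computation then shows that $\ker\pi$ has the $2n$-dimensional basis
\[
\{c^{(1)},\dots,c^{(n-1)}\}\cup\{s_2\}\cup\{b_1^{(1)}+r_2\}\cup\{b_2^{(i)}+b_1^{(i+1)} : 1\le i\le n-2\}\cup\{s_1+b_2^{(n-1)}\},
\]
and reading off the $X$- and $Y$-actions on these vectors one sees that $\ker\pi$ is a single connected string of the exact shape of $V_{2n,\infty}$ --- with $b_1^{(1)}+r_2$ in the role of $a_1$, the elements $s_2,c^{(1)},\dots,c^{(n-1)}$ in the role of $b_1,\dots,b_n$, and $b_2^{(i)}+b_1^{(i+1)}$, $s_1+b_2^{(n-1)}$ in the roles of $a_2,\dots,a_n$. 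Since this string is indecomposable, the classification forces $\ker\pi\cong V_{2n,\infty}=M$.

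Next I would check that $\pi$ is $\chi$-split. Each $\kk H_i$ is isomorphic to $\kk[t]/(t^2)$ and hence self-injective, so a short exact sequence of $\kk H_i$-modules splits as soon as its middle term is abstractly isomorphic to the direct sum of its end terms, equivalently (given the dimension count) as soon as $\dim D^{H_i} = \dim(\ker\pi)^{H_i} + \dim M^{H_i}$. Using $\ker\pi\cong M$, this reduces to three short computations with the restrictions of $P$ and $Q_\tau$: $M\!\downarrow_{H_1}$ and $M\!\downarrow_{H_3}$ are free, so the splitting over $H_1$ and $H_3$ is automatic, while over $H_2$ one gets $\dim D^{H_2} = 2n+2 = (n+1)+(n+1) = \dim M^{H_2}+\dim(\ker\pi)^{H_2}$. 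Thus $\pi$ is a $\chi$-split epimorphism from a module that is projective relative to $\chi$, and the relative Schanuel lemma gives $\ker\pi\cong \Omega_\chi(M)\oplus(\text{rel.\ proj.})$; since $\ker\pi\cong V_{2n,\infty}$ is indecomposable and --- using $n\ge 2$ --- is none of the four indecomposables projective relative to $\chi$, the relatively projective summand is zero and $\Omega_\chi(M)\cong M$, which is (2). Comparing $\pi$ with the relative projective cover $\pi_N\colon N\to M$ by the relative Schanuel lemma once more gives $\ker\pi\oplus N\cong \Omega_\chi(M)\oplus D\cong V_{2n,\infty}\oplus D$, and cancelling the copy of $V_{2n,\infty}\cong\ker\pi$ via Krull--Schmidt yields $N\cong D = 2Q_\tau\oplus(n-1)P$, which is (1).

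The one genuinely delicate point is the choice of $\pi$: its kernel must be exactly $V_{2n,\infty}$, with no spurious relatively projective summand. A careless choice --- for instance sending the second copy of $Q_\tau$ to $0$ --- splits a copy of $Q_\tau$ off the kernel and so fails to be minimal; the image of the extra top generator $r_2$ must be arranged so that its $Q_\tau$ is genuinely glued into the string. Once $\pi$ is chosen correctly, the rest is the same Alperin-diagram bookkeeping as in the preceding lemmas.
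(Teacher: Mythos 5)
Your proposal is correct and takes essentially the same route as the paper: you construct the same explicit $\chi$-split epimorphism $2Q_{\tau}\oplus(n-1)P \to V_{2n,\infty}$ (with the roles of the two $Q_{\tau}$-generators swapped), read off the kernel as a string with the exact diagram of $V_{2n,\infty}$, and verify splitting over each $H_i$ by comparing restrictions. The only (harmless) deviation is that you deduce the minimality statement (1) from the relative Schanuel lemma plus Krull--Schmidt, whereas the paper argues directly that any relative projective cover must already contain $2Q_{\tau}$ and $(n-1)P$ as summands; both arguments are valid.
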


\begin{proof} Let $\pi: N \rightarrow M$ be the relative projective cover of $M$. Notice that 
\begin{equation}\label{Mres13} M \downarrow_{H_i} = n \kk H_i\end{equation} for $i=1,3$ whereas 
\begin{equation}\label{Mres2} M \downarrow_{H_2} = 2 \kk_{H_2} \oplus (n-1) \kk H_2.\end{equation}
So if $\pi: N \rightarrow M$ is to split on restriction to $H_2$, $N$ must contain a pair of direct summands isomorphic to $Q_{\tau}$.
On the other hand, retaining the notation from Proposition \ref{classify}, the basis elements $a_1, \ldots, a_{n-1}$ are not fixed by any element of $G$, so the same must be true of their unique pre-images in $N$. From this it follows that $N$ must contain $n-1$ direct summands isomorphic to $P$.

We will construct a $\kk G$-homomorphism $2 Q_{\tau} \oplus (n-1) P \rightarrow M$. The following diagram gives the labelling for a basis of the domain:

\begin{tikzpicture}
\draw[fill] (1,0) circle[radius=0.05];
\draw[fill] (0,-1) circle[radius=0.05];
\draw[->] (1,0) -- (0.5,-0.5);
\draw (0.5,-0.5) -- (0,-1);
\node at (0.5,-2) {$Q_{\tau}$};
\node[above] at (1,0) {$r_1$};
\node[below] at (0,-1) {$s_1$};

\draw[fill] (3,0) circle[radius=0.05];
\draw[fill] (2,-1) circle[radius=0.05];
\draw[->] (3,0) -- (2.5,-0.5);
\draw (2.5,-0.5) -- (2,-1);
\node at (2.5,-2) {$Q_{\tau}$};
\node[above] at (3,0) {$r_2$};
\node[below] at (2,-1) {$s_2$};

\draw[fill] (6,-1) circle[radius = 0.05];
\node[left] at (6,-1) {$x_1$};
\draw[fill] (8,-1) circle[radius = 0.05];
\node[right] at (8,-1) {$y_1$};
\draw[fill] (7,0) circle[radius = 0.05];
\node[above] at (7,0) {$w_1$};
\draw[fill] (7,-2) circle[radius = 0.05];
\node[below] at (7,-2) {$z_1$};

\draw[->] (7,0) -- (7.5,-0.5);
\draw (7.5,-0.5) -- (8,-1);
\draw[->] (7,0) -- (6.5,-0.5);
\draw (6.5,-0.5) -- (6,-1);
\draw[->] (8,-1) -- (7.5,-1.5);
\draw (7.5,-1.5) -- (7,-2);
\draw[->] (6,-1) -- (6.5,-1.5);
\draw (6.5,-1.5) -- (7,-2);

\node at (9,-1) {$\ldots$};

\draw[fill] (10,-1) circle[radius = 0.05];
\node[left] at (10,-1) {$x_{n-1}$};
\draw[fill] (12,-1) circle[radius = 0.05];
\node[right] at (12,-1) {$y_{n-1}$};
\draw[fill] (11,0) circle[radius = 0.05];
\node[above] at (11,0) {$w_{n-1}$};
\draw[fill] (11,-2) circle[radius = 0.05];
\node[below] at (11,-2) {$z_{n-1}$};

\draw[->] (11,0) -- (11.5,-0.5);
\draw (11.5,-0.5) -- (12,-1);
\draw[->] (11,0) -- (10.5,-0.5);
\draw (10.5,-0.5) -- (10,-1);
\draw[->] (12,-1) -- (11.5,-1.5);
\draw (10.5,-1.5) -- (11,-2);
\draw[->] (10,-1) -- (10.5,-1.5);
\draw (11.5,-1.5) -- (11,-2);

\end{tikzpicture}
 
We define:
\begin{itemize}
\item $\pi(w_i) = a_i$ for $i=1, \ldots, n-1$.
\item $\pi(x_i) = b_i$ for $i=1, \ldots, n-1$.
\item $\pi(y_i) = b_{i+1}$ for $i=1, \ldots, n-1$.
\item $\pi(z_i)=0$ for $i=1, \ldots, n-1$.
\item $\pi(r_1) = b_1$.
\item $\pi(s_1) = 0$.
\item $\pi(r_2) = a_n$.
\item $\pi(s_2) = b_n$.
\end{itemize}

The reader should check that $\pi$ is a $\kk G$-homomorphism. The kernel of $\pi$ is spanned by
\[\{z_i: i=1, \ldots, n-1\} \cup \{x_i+y_{i-1}: i=2, \ldots, n-1\} \cup \{s_1,x_1+r_2, y_{n-1}+s_2\}.\] 
This has dimension $2n$. The fixed points within this module are spanned by
\[\{z_i: i=1, \ldots, n-1\} \cup \{s_1\}. \]
These span the fixed points of $H_1$ and $H_3$, while $H_2$ has a fixed point space of dimension $n+1$, spanned by the above and $y_{n+1}+s_2$. Therefore we have
\[\ker(\pi) \downarrow_{H_i} \cong n \kk H_i\] for $i=1,3$ and
\[\ker(\pi) \downarrow_{H_2} \cong 2\kk_{H_2}\oplus (n-1) \kk H_2.\]
Note that
\[(2Q_{\tau} \oplus (n-1)P) \downarrow_{H_i} \cong 2n \kk H_i \] for $i=1,3$ and
\[(2Q_{\tau} \oplus (n-1)P) \downarrow_{H_2} \cong 4 \kk_{H_2}  \oplus (2n-2) \kk H_i. \]
Thus, $\pi$ splits on restriction to each $H_i$. The construction ensures the minimality of $2 Q_{\tau} \oplus (n-1)P$, so this is equal to $N$ and we have (1). Further, $\ker(\pi) = \Omega_{\chi}(M)$ must be indecomposable. By the classification (looking at the dimension of the fixed point space of each subgroup of $G$ to distinguish among modules of even dimension) we must have $\Omega_{\chi}(M) \cong M$ as required for (2). 
\end{proof}

Notice that if $\theta(x) = x^n$, then $V_{2n,\theta}$ can be obtained from $V_{2n,\infty}$ by applying the automorphism of $G$ which swaps $\sigma$ and $\tau$. Similarly if $\theta(x)=(x+1)^n$, then $V_{2n,\theta}$ can be obtained from $V_{2n,\infty}$ by applying the automorphism of $G$ which swaps $\sigma \tau$ and $\tau$. We therefore obtain immediately from Lemma \ref{evenheller} above that $\Omega_{\chi}(M) = M$ if $M$ is one of these. 

It remains only to prove the following:
\begin{Lemma} Let $n \geq 1$ and let $M \cong V_{2n,\theta}$, where $\theta$ is neither $x^n$ nor $(x+1)^n$. Then:
\begin{enumerate} 
\item The relative projective cover of $M$ is $nP$;
\item $\Omega_{\chi}(M) \cong M$.
\end{enumerate}
\end{Lemma}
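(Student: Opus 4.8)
The plan is to sidestep an explicit relative resolution: I would show that the \emph{ordinary} projective cover of $M$ is already a $\chi$-split epimorphism, and then read off both assertions from the relative form of Schanuel's lemma (Section~2). First, $M = V_{2n,\theta}$ is not projective relative to $\chi$: it is indecomposable and is none of the four relatively $\chi$-projective indecomposables $P, Q_\sigma, Q_\tau, Q_{\sigma\tau}$ --- when $n = 1$ this is precisely where the hypothesis $\theta \ne x,\,x+1$ enters, and when $n \ge 2$ it follows from comparing dimensions (supplemented, in the borderline case $n = 2$, by $\dim_\kk M^G = 2 \ne 1 = \dim_\kk P^G$).

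Since $M$ is even-dimensional and indecomposable, Proposition~\ref{hellerstandard}(1) gives $\Omega(M) \cong M$, so the projective cover of $M$ fits in a short exact sequence
\[ 0 \longrightarrow M \longrightarrow P(M) \stackrel{\pi}{\longrightarrow} M \longrightarrow 0 . \]
As $\kk G$ is local ($G$ is a $2$-group), $P(M)$ is a direct sum of copies of the unique indecomposable projective $P$, and a dimension count forces $P(M) \cong nP$. The next step is to check that $\pi$ is $\chi$-split. Restricting the sequence to $H_i$ gives a short exact sequence of $\kk H_i$-modules with outer terms $M\downarrow_{H_i}$ and free middle term $nP\downarrow_{H_i}$; the crux is that $M\downarrow_{H_i}$ is itself free over $\kk H_i$. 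Reading the actions of $\sigma - 1$, $\tau - 1$ and $\sigma\tau - 1$ off the diagram of $V_{2n,\theta}$, I would check that these operators have rank $n = \tfrac12\dim M$ exactly when $\theta(0) \ne 0$, unconditionally, and $\theta(1) \ne 0$ respectively; since $\theta$ is a power of an irreducible polynomial, $\theta(0) = 0 \iff \theta = x^n$ and $\theta(1) = 0 \iff \theta = (x+1)^n$, both excluded by hypothesis. So each restricted sequence is a short exact sequence of free --- hence injective --- $\kk H_i$-modules, and therefore splits; thus $\pi$ is $\chi$-split.

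Now $nP$ is projective, hence projective relative to $\chi$ (it is induced from any $H_i \in \chi$, since $\kk G = \kk H_i \uparrow^G$), so $(nP, \pi)$ is a relatively $\chi$-projective module with a $\chi$-split epimorphism onto $M$ whose kernel is $\Omega(M) \cong M$. By the relative Schanuel lemma, $\ker\pi \cong \Omega_\chi(M) \oplus R$ with $R$ projective relative to $\chi$; but $\ker\pi \cong M$ is indecomposable and not relatively $\chi$-projective, so $R = 0$ and $\Omega_\chi(M) \cong M$, proving (2). For (1), comparing $(nP, \pi)$ with the relative projective cover $(Q_0, \pi_0)$ via the relative Schanuel lemma gives $M \oplus Q_0 \cong \Omega_\chi(M) \oplus nP \cong M \oplus nP$, and Krull--Schmidt yields $Q_0 \cong nP$.

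I expect the only genuine work to be the rank computation showing $M\downarrow_{H_i}$ is free for each $i$: this is a routine inspection of the Alperin diagram, but it is exactly where the hypotheses $\theta \ne x^n,(x+1)^n$ are used --- the statement fails for those two polynomials, which is precisely why they were handled separately (Lemma~\ref{evenheller} and the remark after it). One could alternatively imitate Lemma~\ref{evenheller} and write down an explicit $\chi$-split surjection $nP \to M$ together with a basis for its kernel, but the route above is shorter and makes transparent why exactly these $\theta$ must be excluded.
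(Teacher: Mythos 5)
Your proposal is correct and follows essentially the same route as the paper: both exploit that the ordinary projective cover $nP \to M$ (with kernel $\Omega(M)\cong M$) is already $\chi$-split because $M\downarrow_{H_i}$ is free over $\kk H_i$, so that $\Omega_{\chi}(M)=\Omega(M)\cong M$. The only differences are minor: you deduce part (1) from the relative Schanuel lemma plus Krull--Schmidt, where the paper argues minimality directly by ruling out $Q_{\sigma}$, $Q_{\tau}$, $Q_{\sigma\tau}$ summands, and you spell out the rank computation showing $M\downarrow_{H_i}$ is free (equivalent to $\theta(0)\neq 0$ and $\theta(1)\neq 0$), a fact the paper simply asserts.
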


\begin{proof} Observe that $M \downarrow_{H_i} = n \kk H_i$ for each $i$. The proof of \cite[Proposition~3.1]{Kleinfoursurvey} shows that the projective (as opposed to relatively projective) cover of $M$ is $nP$ and $\Omega(M) \cong M$, so there is a surjective $\kk G$-homomorphism $\pi: nP \rightarrow M$ with kernel isomorphic to $M$. Noting that $nP \downarrow_{H_i} \cong 2n \kk H_i$ for each $i$, we see that $\pi$ splits on restriction to each $H_i$. On the other hand, if $N$ is a $\kk G$-module having $Q_{\tau}$ (resp. $Q_{\sigma}, Q_{\sigma \tau}$) as a direct summand then $N \downarrow_{H_i}$ contains a pair of trivial $\kk H_i$-modules as direct summand, and no surjective homomorphism $N \rightarrow M$ may split. This shows the minimality of the dimension of $nP$ among relatively projective modules with a $\chi$-split epimorphism to $M$, i.e. we have proved (1). We also have
\[\Omega_{\chi}(M) = \ker(\pi) = \Omega(M) \cong M\] as required for (2).  
\end{proof}

\begin{rem} Combining all the Lemmas in this section with Proposition \ref{hellerstandard}, we obtain Theorem \ref{heller}.
\end{rem}

\subsection{Computing Cohomology}

In this subsection we will determine $H^i(G,N)$ for all $i \geq 0$ and for all indecomposable $\kk G$-modules $N$. First observe that if $N$ is projective relative to $\chi$, then $H^i(G,N)=0$ for all $i>0$: this is an immediate consequence of Proposition \ref{cohomtool}(ii). Further, recall from part (i) of the same that $H_{\chi}^0(G,N) = N^G$ for any $\kk G$-module. It follows that:

\begin{prop} Let $N \in \{P, Q_{\sigma}, Q_{\tau}, Q_{\sigma \tau}\}$. Then.
\[\dim(H_{\chi}^i(G,N)) = \left\{\begin{array}{cc} 1 & i=0, \\ 0 & \ \text{otherwise.} \end{array} \right.\]
\end{prop}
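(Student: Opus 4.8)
The statement to prove is elementary given the machinery already assembled, so the plan is short. The four modules $P, Q_\sigma, Q_\tau, Q_{\sigma\tau}$ are precisely the indecomposable modules that are projective relative to $\chi$, as noted at the start of Section~3.1. By Proposition~\ref{cohomtool}(ii), $H^i_\chi(G,N) \cong \underline{\Hom}^\chi_{\kk G}(\Omega^i_\chi(\kk),N)$ for $i>0$; but $N$ projective relative to $\chi$ means every $\kk G$-homomorphism into $N$ trivially factors through a relatively projective module, namely $N$ itself (via the identity), so $\underline{\Hom}^\chi_{\kk G}(-,N) = 0$. Hence $H^i_\chi(G,N) = 0$ for all $i>0$.

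For $i=0$, Proposition~\ref{cohomtool}(i) gives $H^0_\chi(G,N) = N^G$, so it remains to compute the dimension of the fixed-point space of each of the four modules. For $P$, the diagram in Proposition~\ref{classify}(5) shows $P^G$ is spanned by $c$ (the unique socle element killed by both $X$ and $Y$), so $\dim P^G = 1$. For $Q_\tau = V_{2,\infty} = \kk_{H_2}\uparrow^G$, the diagram shows the socle is one-dimensional, spanned by the element $b_1$ (equivalently, by Frobenius reciprocity $\Hom_{\kk G}(\kk, \kk_{H_2}\uparrow^G) \cong \Hom_{\kk H_2}(\kk,\kk) $ is one-dimensional), so $\dim Q_\tau^G = 1$; the same holds for $Q_\sigma$ and $Q_{\sigma\tau}$ by the symmetry under the automorphisms of $G$ permuting $\sigma, \tau, \sigma\tau$. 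This yields $\dim H^0_\chi(G,N) = 1$ in every case, completing the proof.

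There is essentially no obstacle here; the only thing requiring a moment of care is checking that the assertion ``$N$ projective relative to $\chi$ implies $\underline{\Hom}^\chi_{\kk G}(M,N)=0$ for all $M$'' is immediate from Lemma~\ref{factorsthrough}(ii), since $\id_N$ exhibits any $\alpha\colon M\to N$ as factoring through the relatively projective module $N$. I would present the $i>0$ vanishing as a one-line consequence of Proposition~\ref{cohomtool}(ii) as the authors already flag in the paragraph preceding the statement, and then simply tabulate the four fixed-point dimensions for the $i=0$ case.

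\begin{proof}
If $N$ is projective relative to $\chi$ and $M$ is any $\kk G$-module, then any $\alpha \in \Hom_{\kk G}(M,N)$ factors as $\id_N \circ \alpha$ through the relatively projective module $N$, so by Lemma \ref{factorsthrough} we have $\underline{\Hom}^\chi_{\kk G}(M,N) = 0$. Taking $M = \Omega^i_\chi(\kk)$ and applying Proposition \ref{cohomtool}(ii), we conclude $H^i_\chi(G,N) = 0$ for all $i > 0$ and all $N \in \{P,Q_\sigma,Q_\tau,Q_{\sigma\tau}\}$.

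It remains to treat $i = 0$. By Proposition \ref{cohomtool}(i), $H^0_\chi(G,N) = N^G$. Inspecting the diagram for $P$ in Proposition \ref{classify}(5), the only basis element annihilated by both $X$ and $Y$ is $c$, and $P^G = \kk c$, so $\dim P^G = 1$. For $Q_\tau = V_{2,\infty} = \kk_{H_2}\uparrow^G$, Frobenius reciprocity gives $\Hom_{\kk G}(\kk, Q_\tau) \cong \Hom_{\kk H_2}(\kk,\kk)$, which is one-dimensional, so $\dim Q_\tau^G = 1$; applying the automorphisms of $G$ permuting $\sigma,\tau,\sigma\tau$ yields $\dim Q_\sigma^G = \dim Q_{\sigma\tau}^G = 1$ as well. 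Hence $\dim H^0_\chi(G,N) = 1$ for all four modules, and the claimed formula follows.
\end{proof}
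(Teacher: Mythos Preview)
Your proof is correct and follows essentially the same approach as the paper: the vanishing for $i>0$ is deduced from Proposition~\ref{cohomtool}(ii) together with the relative projectivity of $N$, and the $i=0$ case reduces to computing $N^G$. You simply make explicit the step (via Lemma~\ref{factorsthrough}) that any map into a relatively projective $N$ lies in $\Hom_\kk(M,N)^{G,\chi}$, and you spell out the one-dimensional fixed-point spaces, whereas the paper leaves these as tacit observations in the paragraph preceding the proposition.
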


Now we consider even-dimensional modules which are not relatively projective. Recall that for $i>0$ we have
\[H_{\chi}^i(G,N) = \underline{\Hom}^{\chi}_{\kk G}(\Omega^i_{\chi}(\kk),N) \cong \underline{\Hom}^{\chi}_{\kk G}(\kk,\Omega_{\chi}^{-i}(N)) \cong \underline{\Hom}^{\chi}_{\kk G}(\kk,N) \cong N^G_{\chi}\]
using the fact that, for these modules $N$, we have $\Omega_{\chi}^{-i}(N) \cong N$.

We obtain by direct calculation:

\begin{prop} Let $N $ be an even-dimensional $\kk G$-module which is not projective relative to $\chi$. Then.
\[\dim(H_{\chi}^i(G,N)) = \left\{\begin{array}{cc} n & i=0 \\ n-1 & \ \text{otherwise} \end{array} \right.\]
if $N \cong V_{2n,\infty}$ or $N \cong V_{2n, \theta}$ where $\theta(x) = x^n$ or $\theta(x) = (x+1)^n$,
 while

\[\dim(H_{\chi}^i(G,N)) = n\] for any $i$, if $V \cong V_{2n,\theta}$ for some other choice of $\theta$.
\end{prop}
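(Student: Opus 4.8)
The plan is to reduce the whole statement to computing, for each module $N$ in question, the two integers $\dim N^G$ and $\dim N^{G,\chi}$, where $N^{G,\chi}=\sum_{H\in\chi}\Tr^G_H(N^H)$. Indeed, the chain of isomorphisms displayed just before the statement gives $H^i_\chi(G,N)\cong N^G_\chi$ for all $i>0$, hence $\dim H^i_\chi(G,N)=\dim N^G-\dim N^{G,\chi}$ in that range, while Proposition \ref{cohomtool}(i) gives $\dim H^0_\chi(G,N)=\dim N^G$. Since $\chi$ is the set of \emph{all} proper subgroups of $G$, it is stable under $\operatorname{Aut}(G)$, so twisting $N$ by an element of $\operatorname{Aut}(G)$ permutes the subspaces $N^H$ and the maps $\Tr^G_H$ among themselves and hence changes neither $\dim N^G$ nor $\dim N^{G,\chi}$; by the remarks after Lemma \ref{evenheller} the modules $V_{2n,\infty}$, $V_{2n,x^n}$ and $V_{2n,(x+1)^n}$ are $\operatorname{Aut}(G)$-twists of one another, so it suffices to treat $V_{2n,\infty}$ among the three, and then the remaining $V_{2n,\theta}$. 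Throughout I will write $X=\sigma-1$, $Y=\tau-1$, $Z=\sigma\tau-1=X+Y+XY$, and use that in characteristic $2$ we have $\Tr^G_{H_1}(m)=m+\tau m=Ym$ on $N^{H_1}=\ker X$, $\Tr^G_{H_2}(m)=m+\sigma m=Xm$ on $N^{H_2}=\ker Y$, and $\Tr^G_{H_3}(m)=m+\sigma m=Xm$ on $N^{H_3}=\ker Z$; consequently $N^{G,\chi}=Y(\ker X)+X(\ker Y)+X(\ker Z)$, and each summand lies inside $N^G=\ker X\cap\ker Y$ because $X,Y$ commute and square to zero.

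For $N\cong V_{2n,\infty}$ (with $n\ge 2$, the case $V_{2,\infty}=Q_\tau$ being excluded by hypothesis), I would read off from the diagram in Proposition \ref{classify} the basis $a_1,\dots,a_n,b_1,\dots,b_n$ with $Xa_i=b_i$ $(1\le i\le n)$, $Ya_i=b_{i+1}$ $(1\le i\le n-1)$, $Ya_n=0$ and $Xb_j=Yb_j=0$. A short linear-algebra check then gives $N^G=\ker X=\ker Z=\langle b_1,\dots,b_n\rangle$ and $\ker Y=\langle a_n\rangle\oplus N^G$, so $Y(\ker X)=X(\ker Z)=0$ while $X(\ker Y)=\langle Xa_n\rangle=\langle b_n\rangle$. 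Hence $\dim N^G=n$ and $\dim N^{G,\chi}=1$, which yields $\dim H^0_\chi(G,N)=n$ and $\dim H^i_\chi(G,N)=n-1$ for $i>0$; by the twist argument the same holds for $V_{2n,x^n}$ and $V_{2n,(x+1)^n}$.

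For $N\cong V_{2n,\theta}$ with $\theta$ neither $x^n$ nor $(x+1)^n$ (so also $\theta\ne\infty$, these being distinct modules), I would again read off the basis $a_1,\dots,a_n,b_1,\dots,b_n$ with $Ya_i=b_i$ $(1\le i\le n)$, $Xa_j=b_{j-1}$ $(2\le j\le n)$, $Xa_1=\sum_{i=1}^n\lambda_i b_i$ where $\theta(x)=\sum_{i=0}^n\lambda_i x^{n-i}$ is monic $(\lambda_0=1)$, and $Xb_j=Yb_j=0$. Then $\ker Y=\langle b_1,\dots,b_n\rangle$, and since $\lambda_n=\theta(0)\ne0$ (because $x\nmid\theta$) also $\ker X=\langle b_1,\dots,b_n\rangle$, so $N^G=\langle b_1,\dots,b_n\rangle$ has dimension $n$ and $X(\ker Y)=Y(\ker X)=0$. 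It then remains to prove $X(\ker Z)=0$: writing $v=\sum_i c_i a_i+\sum_j d_j b_j$ and expanding, $Zv=0$ unwinds into a recursion expressing each $c_j$ $(2\le j\le n)$ as $c_1$ times a partial sum of the coefficients of $Za_1$, together with one extra relation which simplifies to $c_1\,\theta(1)=0$; since $\theta(1)\ne0$ (because $(x+1)\nmid\theta$) this forces $c_1=0$, hence all $c_j=0$, i.e. $\ker Z=N^G$ and $X(\ker Z)=0$. Therefore $N^{G,\chi}=0$, $N^G_\chi=N^G$, and $\dim H^i_\chi(G,N)=n$ for every $i\ge0$.

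The main obstacle I anticipate is precisely this last point: tracking the constants $\theta(0)$ and $\theta(1)$ through the kernels of $X$ and of $Z=X+Y+XY$ acting on $V_{2n,\theta}$, which is a short but fiddly recursion whose outcome pinpoints exactly which three values of the parameter (namely $0$, $1$ and $\infty$, corresponding to $H_1$, $H_2$ and $H_3$) give the smaller cohomology. Everything else is routine bookkeeping on Alperin diagrams, made painless by the observation that the three relevant relative traces are just multiplication by $X$ or $Y$, whose images are forced into the socle submodule $N^G$.
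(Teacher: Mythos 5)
Your proposal is correct and follows essentially the same route as the paper: the paper reduces $H^i_\chi(G,N)$ for $i>0$ to $N^G_\chi$ via the displayed chain of isomorphisms (using $\Omega_\chi^{-i}(N)\cong N$) and then asserts the dimensions ``by direct calculation,'' which is exactly the fixed-point/relative-trace computation you carry out explicitly. Your identification of the transfers with multiplication by $X$ or $Y$, the $\operatorname{Aut}(G)$-twist reduction to $V_{2n,\infty}$, and the recursion isolating $\theta(0)\neq 0$ and $\theta(1)\neq 0$ all check out and simply supply the details the paper omits.
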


For odd-dimensional modules we proceed as follows. Let $N$ be an odd-dimensional indecomposble module and let $i>0$. Then
\[H_{\chi}^i(G,N) = \underline{\Hom}_{\kk G}(\Omega^i_{\chi}(\kk),N) \cong \underline{\Hom}_{\kk G}(\kk,\Omega_{\chi}^{-i}(N) \cong \underline{\Hom}_{\kk G}(\kk,\Omega^{2i}(N)) \cong \Omega^{2i}(N)^G_{\chi}\]
using Theorem \ref{heller}. Suppose $N \cong V_{2n+1}$ where $n \geq 0$. Then $\Omega^{2i}(N) \cong V_{2(n+2i)+1}$. A basis for $V_{2(n+2i)+1}$ is given by $\{a_0,a_1, \ldots, a_{n+2i},b_1, b_2, \ldots, b_{n+2i}\}$, with action given by the diagram in Proposition \ref{classify}. The $b_i$ are all fixed points, and in addition $a_0$ is fixed by $H_1$, $a_{n+2i}$ by $H_2$ and $a_0+a_1+ \ldots + a_{n+2i}$ by $H_3$. Therefore $b_1, b_{n+2i}$ and $b_1+b_2+ \ldots + b_{n+2i}$ lie in $\Omega^{2i}(N)^{G, \chi}$. We therefore have

\begin{prop} Let $N \cong V_{2n+1}$ for some $n \geq 0$. Then 
\begin{enumerate}
\item $\dim(H_{\chi}^0(G,N)) = n$ if $n > 0$, and 1 if $n=0$.

\item $\dim(H_{\chi}^i(G,N)) = \max(0,n+2i-3)$ for $i>0$.
\end{enumerate}

\end{prop}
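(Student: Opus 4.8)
The plan is to compute the two quantities $\dim H^0_\chi(G,N)$ and $\dim H^i_\chi(G,N)$ for $i>0$ separately, in both cases by reducing to a concrete fixed-point computation inside an explicit module.

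For part (1), we have $H^0_\chi(G,N) = N^G$ by Proposition \ref{cohomtool}(i), so we simply count the dimension of the fixed-point space of $N \cong V_{2n+1}$. Using the basis $\{a_0,\dots,a_n,b_1,\dots,b_n\}$ from Proposition \ref{classify}, each $b_j$ is killed by both $X$ and $Y$ and hence is $G$-fixed, while $X a_0 = b_1 \neq 0$ shows $a_0$ is not fixed, and similarly no nontrivial combination of the $a_j$ is fixed by all of $G$ (one checks $X$ and $Y$ act with the displayed arrows). So $N^G$ has basis $\{b_1,\dots,b_n\}$ and dimension $n$ when $n\geq 1$; when $n=0$ we have $N\cong V_1 \cong \kk$, whose fixed space is $1$-dimensional.

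For part (2), fix $i>0$. Combining Proposition \ref{cohomtool}(ii), Proposition \ref{suspendhom}, Proposition \ref{omegaomnibus}(ii--iii) (to move $\Omega^i_\chi$ across to $\Omega^{-i}_\chi(N)$), and Theorem \ref{heller} (which gives $\Omega^{-i}_\chi(N) \cong \Omega^{2i}(N)$ since $N$ is odd-dimensional and not relatively projective — noting no odd-dimensional module is relatively projective, as $P,Q_\sigma,Q_\tau,Q_{\sigma\tau}$ are all even-dimensional), one obtains
\[
H^i_\chi(G,N) \cong \underline{\Hom}^\chi_{\kk G}(\kk, \Omega^{2i}(N)) \cong \Omega^{2i}(N)^G_\chi.
\]
By Proposition \ref{hellerstandard}(3), iterated, $\Omega^{2i}(V_{2n+1}) \cong V_{2(n+2i)+1}$. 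Now set $m = n+2i$ and compute $V_{2m+1}{}^G_\chi = V_{2m+1}{}^G / V_{2m+1}{}^{G,\chi}$. By part (1)'s computation, $V_{2m+1}{}^G$ has basis $\{b_1,\dots,b_m\}$, of dimension $m$. For the denominator, I would identify $V_{2m+1}{}^{G,\chi} = \sum_{j=1}^3 \Tr^G_{H_j}(V_{2m+1}{}^{H_j})$. A short computation of transfers (for $H_1 = \langle\sigma\rangle$, $\Tr^G_{H_1}(x) = x + \tau x = (2+Y)x = Yx$ since $\chr\kk = 2$; similarly $\Tr^G_{H_2}(x) = Xx$ and $\Tr^G_{H_3}$ is $(\sigma\tau-1) + $ identity-type contribution) shows that the image of each transfer is spanned by applying the appropriate element of $\{X,Y,XY\text{-combinations}\}$ to the $a_j$'s, yielding precisely $b_1$ (from $H_1$-fixed $a_0$), $b_m$ (from $H_2$-fixed $a_m$), and $b_1+b_2+\cdots+b_m$ (from the $H_3$-fixed element $a_0+\cdots+a_m$), as indicated in the text preceding the statement. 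Hence $V_{2m+1}{}^{G,\chi}$ is the span of these three vectors, which has dimension $\min(3,m)$: it is $3$-dimensional once $m\geq 3$, and of dimension $m$ when $m \in\{1,2\}$ (and $0$ when $m=0$, which does not occur here since $m = n+2i \geq 2$). Therefore
\[
\dim H^i_\chi(G,N) = m - \min(3,m) = \max(0,\,m-3) = \max(0,\,n+2i-3),
\]
giving the claimed formula.

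The main obstacle is the transfer computation establishing that $V_{2m+1}{}^{G,\chi}$ is exactly the $3$-dimensional space spanned by $b_1$, $b_m$, and $b_1+\cdots+b_m$ (and in particular that these three are linearly independent precisely when $m\geq 3$). This requires care: one must check that each $\Tr^G_{H_j}$ applied to a general $H_j$-fixed vector lands in this span, and conversely that the three listed generators are actually hit. Since $H_j$ has index $2$ in $G$, the transfer from $H_j$ is addition of the two cosets, which in characteristic $2$ simplifies to multiplication by the generator of the complementary subgroup minus $1$; applied to the $H_j$-fixed elements $a_0$, $a_m$, $a_0+\cdots+a_m$ this produces the three named $b$-vectors, and applied to the $b_k$'s (which are fully $G$-fixed) it produces $0$. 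This pins down the image. Everything else is bookkeeping with the explicit bases and the already-established $\Omega$-periodicity from Theorem \ref{heller}.
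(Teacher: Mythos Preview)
Your argument is correct and follows exactly the paper's approach: identify $H^i_\chi(G,N)$ with $\Omega^{2i}(N)^G_\chi$ via Theorem~\ref{heller} and Propositions~\ref{cohomtool} and~\ref{suspendhom}, then compute the transfer images inside $V_{2(n+2i)+1}$ to find that $V_{2m+1}^{G,\chi}$ is spanned by $b_1$, $b_m$, and $b_1+\cdots+b_m$. One small slip: from the diagram it is $Ya_0=b_1$ (and $Xa_0=0$), not $Xa_0=b_1$; this does not affect your conclusion that $a_0\notin N^G$, but it is worth correcting since the roles of $X$ and $Y$ matter in the transfer computation that follows.
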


\begin{rem} This includes \cite[Theorem~1.2]{PamukYalcin} as a special case ($n=0$).
\end{rem}
 For the remaining odd dimensional modules things are a little more complicated, since $\Omega^{2i}(N)$ eventually moves into the ``positive'' part of the spectrum. We begin by noting that if $n \geq 0$, then $V^{H_i}_{-(2n+1)} = V^G_{-(2n+1)}$  for all $i$. Therefore $(V_{-(2n+1)})^{G,\chi} = 0$. 

Now let $N \cong V_{-(2n+1)}$ where $n \geq 1$. For $i \leq n/2$ we have $\Omega^{2i}(N) \cong V_{-(2(n-2i)+1)}$. Therefore
\[H_{\chi}^i(G,N) = \underline{\Hom}_{\kk G}(\Omega^i_{\chi}(\kk),N) \cong \underline{\Hom}_{\kk G}(\kk,\Omega_{\chi}^{-i}(N) \cong \underline{\Hom}_{\kk G}(\kk,\Omega^{2i}(N)) \cong  \Omega^{2i}(N)^G.\]
For $i>n/2$ we have $\Omega^{2i}(N) \cong V_{2(2i-n)+1}$. We therefore obtain the following:
\begin{prop} Let $N \cong V_{-(2n+1)}$ where $n \geq 1$. Then
\[\dim(H_{\chi}^i(G,N)) = \left\{ \begin{array}{cc} n+1-2i & i \leq n/2 \\ 
\max(0,2i-n-3) & i > n/2. \end{array} \right.\]
\end{prop}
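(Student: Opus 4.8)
The plan is to proceed exactly as in the previous propositions of this subsection: reduce $H^i_\chi(G,N)$ to a fixed-point space modulo transfers, and then read the dimensions off the Alperin diagrams of Proposition \ref{classify}. For every $i\ge 0$ one has, combining Proposition \ref{cohomtool}(ii) with Proposition \ref{suspendhom} applied with exponent $-i$ (legitimate because $\kk$ is not projective relative to $\chi$, so $\Omega^0_\chi(\kk)=\kk$ and $\Omega^{-i}_\chi\Omega^i_\chi(\kk)=\kk$),
\[H^i_\chi(G,N)\;\cong\;\underline{\Hom}^\chi_{\kk G}(\Omega^i_\chi(\kk),N)\;\cong\;\underline{\Hom}^\chi_{\kk G}\bigl(\kk,\Omega^{-i}_\chi(N)\bigr)\;\cong\;\Omega^{-i}_\chi(N)^G_\chi,\]
the last step being the identity $\underline{\Hom}^\chi_{\kk G}(\kk,M)=M^G_\chi$. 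Since $N\cong V_{-(2n+1)}$ is odd-dimensional and no odd-dimensional indecomposable is projective relative to $\chi$, Theorem \ref{heller} gives $\Omega_\chi\cong\Omega^{-2}$ on every module that occurs, hence $\Omega^{-i}_\chi(N)\cong\Omega^{2i}(N)$. So the whole problem reduces to identifying the indecomposable $\Omega^{2i}(V_{-(2n+1)})$ and computing its $G$-fixed points modulo $\chi$-transfers.

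First I would record, from Proposition \ref{hellerstandard}, the $\Omega$-orbit of $V_{-(2n+1)}$: one has $\Omega^{j}(V_{-(2n+1)})\cong V_{-(2(n-j)+1)}$ for $0\le j\le n$ and $\Omega^{j}(V_{-(2n+1)})\cong V_{2(j-n)+1}$ for $j\ge n$, the two descriptions agreeing at $j=n$, where the module is $V_{-1}=V_1=\kk$. Setting $j=2i$ splits the computation into exactly the two ranges $i\le n/2$ and $i>n/2$ appearing in the statement (and $i=0$ lies in the first range, recovering $H^0_\chi(G,N)=N^G$).

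For $i\le n/2$ the relevant module is $V_{-(2m+1)}$ with $m=n-2i\ge 0$. As already observed just before the statement, every $H_j$-fixed vector of $V_{-(2m+1)}$ is $G$-fixed, so each $\Tr^G_{H_j}$ (of index $2$, hence acting as multiplication by $2=0$ on $G$-fixed vectors) annihilates it; thus $(V_{-(2m+1)})^{G,\chi}=0$ and $\dim H^i_\chi(G,N)=\dim (V_{-(2m+1)})^G=m+1=n+1-2i$. For $i>n/2$ the module is $V_{2m+1}$ with $m=2i-n\ge 1$; in the basis $\{a_0,\dots,a_m,b_1,\dots,b_m\}$ of Proposition \ref{classify} the $G$-fixed space is $\langle b_1,\dots,b_m\rangle$, and a short calculation of the images of $\Tr^G_{H_1},\Tr^G_{H_2},\Tr^G_{H_3}$ on the respective fixed spaces $V^{H_1},V^{H_2},V^{H_3}$ gives $\langle b_1\rangle$, $\langle b_m\rangle$, $\langle b_1+\dots+b_m\rangle$. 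Hence $(V_{2m+1})^{G,\chi}=\langle b_1,\,b_m,\,b_1+\dots+b_m\rangle$ has dimension $\min(3,m)$, so $\dim H^i_\chi(G,N)=m-\min(3,m)=\max(0,2i-n-3)$.

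The only point requiring care — and the one I would check first — is the boundary $i=n/2$ (when $n$ is even): there $\Omega^{2i}(N)$ is the trivial module $V_{-1}=\kk$, which belongs to the first (``negative'') range and yields $\dim H^i_\chi(G,N)=1=n+1-2i$, rather than the value $0$ that blindly substituting into the second formula would give. Beyond this, and beyond the bookkeeping needed to keep the two ranges and the $\Omega$-orbit straight, the argument is a routine reading of the diagrams, so I do not anticipate a genuine obstacle.
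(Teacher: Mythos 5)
Your proposal is correct and follows essentially the same route as the paper: reduce via Proposition \ref{cohomtool}(ii), Proposition \ref{suspendhom} and Theorem \ref{heller} to computing $\Omega^{2i}(N)^G_\chi$, identify $\Omega^{2i}(V_{-(2n+1)})$ as $V_{-(2(n-2i)+1)}$ or $V_{2(2i-n)+1}$ according as $i\le n/2$ or $i>n/2$, and read off fixed points modulo transfers from the diagrams (the transfer images $\langle b_1\rangle$, $\langle b_m\rangle$, $\langle b_1+\cdots+b_m\rangle$ being exactly the computation the paper carries out for $V_{2m+1}$). Your explicit treatment of the boundary case $i=n/2$, where the module is $\kk$ and the first formula applies, is a point the paper leaves implicit, but it agrees with the stated result.
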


\subsection{Calculating cup products}
The aim of this section is to prove Theorem \ref{cupprod}. We begin with a lemma:

\begin{Lemma} Let $M \cong V_{-(2m+1)}$ and $N \cong V_{-(2n+1)}$ for some $m > n \geq 0$. Let $\phi \in \Hom_{\kk G}(M,N)$. Then 
\begin{enumerate}
\item $\im(\phi) \subseteq N^G$;
\item $M^G \subseteq \ker(\phi)$.
\end{enumerate}
\end{Lemma}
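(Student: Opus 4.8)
The plan is a direct computation with the explicit bases of Proposition~\ref{classify}. (If $n=0$ then $N=\kk$ and both statements are immediate, so assume $n\ge 1$.) Write $a_1,\dots,a_m,b_0,\dots,b_m$ for the basis of $M$ and $a'_1,\dots,a'_n,b'_0,\dots,b'_n$ for that of $N$, so that $Xa_i=b_{i-1}$, $Ya_i=b_i$, $Xa'_k=b'_{k-1}$, $Ya'_k=b'_k$, and $X,Y$ annihilate all the $b$'s and $b'$'s; in particular $\mathrm{rad}(M)=\mathrm{soc}(M)=M^G=\langle b_0,\dots,b_m\rangle$ and $N^G=\mathrm{soc}(N)=\langle b'_0,\dots,b'_n\rangle$. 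Given $\phi\in\Hom_{\kk G}(M,N)$, put $u_i:=\phi(a_i)$ for $i=1,\dots,m$. Applying $\phi$ to the identities $b_i=Ya_i=Xa_{i+1}$ for $1\le i\le m-1$ (and $b_0=Xa_1$, $b_m=Ya_m$) yields
\[ Yu_i=\phi(b_i)=Xu_{i+1}\quad(1\le i\le m-1),\qquad \phi(b_0)=Xu_1,\quad \phi(b_m)=Yu_m.\]
I claim it suffices to prove that every $u_i$ lies in $N^G$. Granting this, each $\phi(b_j)$ is obtained by applying $X$ or $Y$ to a vector in $N^G=\mathrm{soc}(N)$, hence vanishes; so $\ker\phi\supseteq\langle b_0,\dots,b_m\rangle=M^G$, which is $(2)$, and, since $\{a_1,\dots,a_m,b_0,\dots,b_m\}$ spans $M$, we get $\im\phi=\langle u_1,\dots,u_m,\phi(b_0),\dots,\phi(b_m)\rangle=\langle u_1,\dots,u_m\rangle\subseteq N^G$, which is $(1)$.

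To prove $u_i\in N^G$, decompose $u_i=v_i+s_i$ with $v_i\in\langle a'_1,\dots,a'_n\rangle$ and $s_i\in\mathrm{soc}(N)$. Since $X$ and $Y$ kill the socle, the relations above become $Yv_i=Xv_{i+1}$ for $1\le i\le m-1$, and the $s_i$ drop out entirely. Writing $v_i=\sum_{k=1}^n c^{(i)}_k a'_k$ and using $Xa'_k=b'_{k-1}$, $Ya'_k=b'_k$, the identity $Yv_i=Xv_{i+1}$ becomes $\sum_{k=1}^n c^{(i)}_k b'_k=\sum_{k=1}^n c^{(i+1)}_k b'_{k-1}$; comparing the coefficients of $b'_0,b'_1,\dots,b'_n$ gives, for each $i=1,\dots,m-1$,
\[ c^{(i+1)}_1=0,\qquad c^{(i)}_k=c^{(i+1)}_{k+1}\ (1\le k\le n-1),\qquad c^{(i)}_n=0.\]
Thus the $m\times n$ matrix $C=\bigl(c^{(i)}_k\bigr)$ is constant along every diagonal $k-i=\text{const}$, its first column is zero except possibly in row $1$, and its last column is zero except possibly in row $m$.

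The final step is a counting argument that forces $C=0$, and this is the one place the hypothesis $m>n$ is used. The diagonals of an $m\times n$ matrix are indexed by $d=k-i$ with $1-m\le d\le n-1$. If $d\le -1$ the diagonal meets the forced-zero entry $(1-d,1)$ (note $1-d\ge 2$); if $d\ge n-m+1$ it meets the forced-zero entry $(n-d,n)$ (note $n-d\le m-1$). Since $m>n$ we have $n-m+1\le 0$, so these two ranges of $d$ together exhaust $\{1-m,\dots,n-1\}$; hence every diagonal is identically zero, i.e.\ $v_i=0$ and $u_i=s_i\in N^G$ for all $i$, and $(1)$ and $(2)$ follow as above. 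I expect the only real care to be the index bookkeeping of this last paragraph — in particular checking that the two ``corner'' diagonals $d=0$ and $d=n-m$ are pinned down, which again relies on $m>n$; everything else is mechanical.
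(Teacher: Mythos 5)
Your proof is correct and follows essentially the same route as the paper: both work with the explicit bases of Proposition~\ref{classify}, extract the relations $Y\phi(a_i)=X\phi(a_{i+1})$, and use $m>n$ to force every coefficient of the $a'_k$ in $\phi(a_i)$ to vanish, after which (1) and (2) are immediate. The only difference is bookkeeping: the paper argues by a maximal-index contradiction with a downward induction and then deduces (2) from (1), whereas you record the coefficients in an $m\times n$ matrix that is constant along diagonals with forced zeros in the boundary columns and count diagonals --- a tidy repackaging of the same pigeonhole.
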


\begin{proof} Note first that $\phi(M^G) \subseteq N^G$ for arbitrary $G$ and $\kk G$-modules $M$ and $N$.
Let $a_1, a_2, \ldots, a_m, b_0, b_1, \ldots, b_m$ and $a'_1,a'_2, \ldots, a'_n, b'_0, b'_1, \ldots, b'_n$ be bases of $M$ and $N$ respectively, with action given by the diagrams in proposition \ref{classify}. Note that if $n=0$, then (1) is immediate. So suppose $n>0$ and (1) does not hold: then we can find a maximal $k \geq 1$ such that $\phi(a_k) \not \in N^G$.

We claim that $k=m$. To see this, write \[\phi(a_k) = \sum_{i=1}^n \lambda_i a'_i \ \mod \ N^G.\]
Then
\[\phi(b_k) =\phi(Ya_k) = Y\phi(a_k) = \sum_{i=1}^n \lambda_i b_i'.\]
If $k<m$ then also $$\phi(b_k) = \phi(Xa_{k+1} ) = X\phi(a_{k+1}) = 0$$ since $\phi(a_{k+1}) \in N^G$. So $\lambda_i = 0$ for all $i$ and $\phi(a_k) \in N^G$, a contradiction.

Now we claim that, for all $0 \leq j \leq n$, we have 
\begin{equation}\label{indhyp} 
\phi(a_{m-j}) = \sum_{i=j+1}^n \lambda_i a'_{i-j} \ \mod \ N^G 
\end{equation}
and $\lambda_i=0$ for $i=1, \ldots, j$. We prove this by induction on $j$. The base case $j=0$ is true by definition. Assuming the above for some $0 \leq j < n$ and noting that $n<m$, we have
\[\phi(b_{m-j-1}) = \phi(Xa_{m-j}) = X\phi(a_{m-j}) = \sum_{i=j+1}^n \lambda_i b'_{i-j-1}.\]
But \[\phi(b_{m-j-1}) = \phi(Ya_{m-j-1}) = Y\phi(a_{m-j-1}) \in YN = \langle b'_1, \ldots, b'_n \rangle\] which shows that $\lambda_{j+1}=0$. Therefore
\[\phi(b_{m-j-1}) = \sum_{i=j+2}^n \lambda_i b'_{i-j-1}\] which shows that
\[\phi(a_{m-j-1}) =  \sum_{i=j+2}^n \lambda_i a'_{i-j-1} \mod \ N^G\] proving our claim. Taking $j=n$ in (\ref{indhyp}) shows that $\phi(a_m) \in N^G$, a contradiction. This proves (1).\\

For (2), let $x \in M^G$. We may write
\[x = \sum_{i=0}^m \mu_i b_i\] for some coefficients $\mu_i$. Then
\[\phi(x) = \sum_{i=0}^m \mu_i \phi(b_i) =  \mu_0 \phi(X a_0) + \sum_{i=1}^m \mu_i \phi (Y a_{i-1}) = \mu_0 X \phi(a_0) + Y \phi(\sum_{i=1}^n \mu_i a_i) = 0\] by (1).
\end{proof}

The following is immediate:
\begin{Corollary}\label{trivcomp} Let $L \cong V_{-(2l+1)}$, $M \cong V_{-(2m+1)}$ and $N \cong V_{-(2n+1)}$ for some $l>m > n \geq0$. Let $\phi \in \Hom_{\kk G}(M,N)$ and $\psi \in \Hom_{\kk G}(L,M)$. Then $\phi \circ \psi = 0$. 
\end{Corollary}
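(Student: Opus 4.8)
The plan is to deduce this directly from the preceding Lemma, which is exactly engineered for this purpose. The Lemma provides two facts about any $\kk G$-homomorphism between modules of the form $V_{-(2\bullet+1)}$ with strictly decreasing parameter: the image lands in the fixed points of the target, and the fixed points of the source lie in the kernel.

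First I would apply part (1) of the Lemma to $\psi \in \Hom_{\kk G}(L,M)$, which is legitimate since $l > m \geq 0$; this gives $\im(\psi) \subseteq M^G$. Next I would apply part (2) of the Lemma to $\phi \in \Hom_{\kk G}(M,N)$, which is legitimate since $m > n \geq 0$; this gives $M^G \subseteq \ker(\phi)$. Chaining these two containments yields $\im(\psi) \subseteq M^G \subseteq \ker(\phi)$, so for every $x \in L$ we have $\psi(x) \in \ker(\phi)$, hence $(\phi \circ \psi)(x) = 0$. Therefore $\phi \circ \psi = 0$.

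There is essentially no obstacle here: the statement is an immediate corollary, and the only thing to check is that the hypotheses $l > m$ and $m > n$ are precisely what is needed to invoke the two halves of the Lemma for $\psi$ and $\phi$ respectively. One could remark that the same argument shows more generally that any composite of $\Hom_{\kk G}$-maps through a chain $V_{-(2l+1)} \to V_{-(2m+1)} \to V_{-(2n+1)}$ with $l > m > n$ vanishes, which is the form in which it will be used when computing cup products in the proof of Theorem \ref{cupprod}.
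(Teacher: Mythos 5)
Your argument is exactly the intended one: the paper declares the Corollary immediate from the preceding Lemma, and the chaining $\im(\psi) \subseteq M^G \subseteq \ker(\phi)$ via parts (1) and (2) is precisely how that immediacy is realised. The proposal is correct and matches the paper's approach.
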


We may now proceed with the proof of Theorem 2:
\begin{proof}
Let $i,j >0$. Let $\alpha \in H_{\chi}^i(G, \kk)$ and $\beta \in H^j_{\chi}(G,\kk)$. Choose $\phi \in \Hom_{\kk G}(\Omega_{\chi}^i(\kk), \kk)$ and $\psi \in \Hom_{\kk G}(\Omega_{\chi}^j(\kk), \kk)$, such that the equivalence classes 
$$[\phi] \in \underline{\Hom}^{\chi}_{\kk G}(\Omega_{\chi}^i(\kk), \kk), [\psi] \in \underline{\Hom}_{\kk G}^{\chi}(\Omega_{\chi}^j(\kk), \kk)$$
represent $\alpha$ and $\beta$ respectively. 
By definition, $\alpha \smile \beta$ is represented by $[\phi \circ \Omega_{\chi}^{i}(\psi)]$. By Lemma \ref{negodd} we have
\[\phi \in \Hom(V_{-(2i+1)},V_{(-1)}), \Omega^i_{\chi}(\psi) \in \Hom(V_{-(2i+2j+1)},V_{-(2i+1)})\] and by Corollary \ref{trivcomp} the composition of these two is the trivial map. 
\end{proof}

\bibliographystyle{plain}
\bibliography{MyBib}

\end{document}